\def\f{\mathcal{F}}
\def\fo{\mathcal{F}}
\def\g{\mathcal{G}}
\def\S{\mathcal{S}}
\def\fin{\mathcal{FIN}}
\def\vc{\mathcal{VC}}
\def\vci{\mathcal{VC}^\infty}
\def\C{\mathcal{C}^\infty}
\def\evc{\underline{\underline{E}}G}
\def\efin{\underline{E}G}
\def\efine{\underline{E}}
\def\z{\mathbb{Z}}
\DeclareMathOperator{\vcd}{vcd}
\DeclareMathOperator{\Mod}{Mod}
\DeclareMathOperator{\PMod}{PMod}
\DeclareMathOperator{\Aut}{Aut}
\DeclareMathOperator{\gdvc}{\underline{\underline{gd}}}
\def\gd{\mathrm{gd }}
\def\gdf{\underline{\mathrm{gd}}}
\def\rhos{\rho_{\sigma}}
\newtheorem{thm}{Theorem}[section]
\newtheorem{prop}[thm]{Proposition}
\theoremstyle{definition}
\newtheorem{rem}[thm]{Remark}
\newtheorem{lem}[thm]{Lemma}
\newtheorem{cor}[thm]{Corollary}
\title[Virtually-cyclic dimension of mapping class groups]{On the virtually-cyclic dimension of mapping class groups of punctured spheres}
\author{J. Aramayona }
\address{Universidad Aut\'onoma de Madrid \& ICMAT}
 \email{aramayona@gmail.com}
\author{D. Juan-Pineda}
\address{Centro  de  Ciencias Matem\'aticas. \\
Universidad Nacional Aut\'onoma de M\'exico, Campus  Morelia\\
Ap.Postal  61-3 Xangari\\ Morelia, Michoac\'an.  M\'EXICO 58089}
 \email{daniel@matmor.unam.mx}
\author{A. Trujillo-Negrete}
\address{Centro  de  Ciencias Matem\'aticas. \\
Universidad Nacional Aut\'onoma de M\'exico, Campus  Morelia\\
Ap.Postal  61-3 Xangari\\ Morelia, Michoac\'an.  M\'EXICO 58089}
 \email{aletn@matmor.unam.mx}
\begin{document}
\begin{abstract}
We calculate the virtually-cyclic dimension of the
mapping class group of a sphere with at most six punctures. As an immediate consequence, we obtain the virtually-cyclic dimension of the mapping class group of the twice-holed torus and of the closed genus-two surface. 

For spheres with an arbitrary number of punctures, we give a new upper bound for the virtually-cyclic dimension of their mapping class group, improving the recent bound of Degrijse-Petrosyan \cite{DP}.
\end{abstract}
\maketitle
\section{Introduction} 

 Given a discrete group $G$, a family $\f$ of subgroups of $G$ is a set of subgroups of $G$ which is closed under conjugation and taking subgroups. Of particular interest here are the families $\fin_G$ and $\vc_G$ which consist, respectively, of finite and virtually cyclic subgroups of $G$.  
 
 A model for the \textit{classifying space $E_{\fo}G$ of the family $\fo$} is a $G$-CW-complex $X$, such that the fixed point set of $H\in \f$ is contractible, and is empty whenever $H\notin \f$. Using standard terminology, we denote  $E_{\fin}G$ by $\efin$,  and $E_{\vc}G$ by $\evc$.  The study of models for these families finds a large part of its motivation in  the Baum-Connes and  Farrell-Jones  conjectures, respectively. 
 
 Although a model for the space $E_{\fo}G$ always exists, it need not be finite dimensional. The smallest possible dimension of a model of $E_{\f}G$ is called the \textit{geometric dimension of $G$ for the family} $\f$, and is usually denoted  $\gd_{\f}G$. 
Again using standard terminology, we will write $\gdf G=\gd_{\fin}G$ and $\gdvc G=\gd_{\vc}G$, and refer to them as the {\em proper geometric dimension} and the {\em virtually-cyclic dimension} of $G$, respectively.  
For many families of groups these two numbers are related by the inequality \begin{equation}
\gdvc G \le \gdf G + 1;
\label{eq:main}
\end{equation} although it is known to not be true in general  \cite[Example 6.5]{DP2}. 
Classes of groups for which it does hold include:  ${\rm CAT}(0)$ groups \cite{luck-cat(0)-vc}, hyperbolic groups \cite{Daniel-Leary}, standard braid groups \cite{FG}, and groups satisfying a certain property (Max) \cite[Theorem 5.8]{luck-weiermann}, which roughly states that every infinite virtually-cyclic subgroup is contained in a unique maximal such subgroup (see Section \ref{sec:general}). 

In this note we investigate the relation between  $\gdvc G$ and $\gdf G$ for the mapping class group $\Mod(S)$ of a connected, orientable surface $S$, mainly in the case when $S$ has genus zero. We stress that mapping class groups do not fall in any of the categories above; however,  they contain finite-index subgroups with property (Max) \cite[Prop. 5.1]{Daniel-Ale}; compare with Lemma \ref{lem-max-0} below. For these subgroups the inequality \eqref{eq:main} holds, although this does not say anything about whether this is the case for the whole group. 

We will denote by $S_{g,b}^n$ the connected orientable surface of genus $g$, with $b$ boundary components and  $n$ punctures. If $b=0$, we will omit $b$ from the notation.  Our main result is as follows: 

\begin{thm}
Let $n\in \{5,6\}$. Then $\gdvc \Mod(S_0^n) = \gdf \Mod(S_0^n) + 1$. 
\label{thm:main}
\end{thm}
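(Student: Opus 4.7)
The plan is to establish the two inequalities $\gdvc \Mod(S_0^n) \le \gdf \Mod(S_0^n) + 1$ and $\gdvc \Mod(S_0^n) \ge \gdf \Mod(S_0^n) + 1$ separately, under the assumption (either proved earlier in the paper or cited from the literature) that $\gdf \Mod(S_0^n) = n - 3$ for $n\in\{5,6\}$.

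For the lower bound I would invoke the fact that the restriction of a model for $\evc G$ to a subgroup $H \le G$ is a model for $\evce H$, which yields $\gdvc H \le \gdvc G$. A natural choice of $H$ is the free abelian subgroup of $\Mod(S_0^n)$ generated by Dehn twists along the $n-3$ interior curves of a pants decomposition of $S_0^n$, giving an embedding of $\z^{n-3}$ in $\Mod(S_0^n)$. Since $\gdvc \z^{k} = k+1$ for $k\ge 2$ by a classical computation, this forces $\gdvc \Mod(S_0^n)\ge n-2 = \gdf \Mod(S_0^n)+1$ for both $n=5$ and $n=6$.

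For the upper bound I would use the L\"uck--Weiermann pushout construction: starting from a model for $\efin G$ and, for each conjugacy class $[V]$ of maximal infinite virtually cyclic subgroups of $G$, models for the classifying space of $N_G[V]$ relative to the appropriate enlarged family, one assembles a model for $\evc G$ whose dimension is bounded by $\max\{\gdf G+1, \dim \text{(normalizer contributions)}\}$. The concrete strategy is to classify the conjugacy classes of maximal infinite virtually cyclic subgroups of $\Mod(S_0^n)$ by the Nielsen--Thurston trichotomy (pseudo-Anosov versus multi-twist on a reducing multicurve) and to show that each normalizer admits a classifying space of dimension at most $\gdf G$ in the relevant family. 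For $n\in\{5,6\}$ this classification is manageable: the complementary pieces in the reducible case are planar surfaces with at most four or five punctures, whose mapping class groups are small and well understood.

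The main obstacle will be the upper bound. The mapping class group $\Mod(S_0^n)$ itself does not satisfy property (Max); only a torsion-free finite-index subgroup does, via the lemma referenced after \eqref{eq:main}. Consequently one must handle with care the torsion that can normalize or interchange distinct maximal infinite virtually cyclic subgroups: the finite symmetries permuting the punctures (and, in the $n=6$ case, the hyperelliptic-type involutions) can merge equivalence classes of pseudo-Anosov axes and of multi-twist reducing systems, producing normalizers with subtle torsion. A secondary difficulty is verifying case by case that the $N_G[V]$ models contribute no more than $\gdf G$ in their relative families, which requires explicit knowledge of the centralizers of pseudo-Anosov and multi-twist elements in $\Mod(S_0^n)$.
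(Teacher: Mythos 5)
Your proposal follows essentially the same route as the paper: the lower bound comes from the rank-$(n-3)$ Dehn-twist subgroup together with $\gdvc \mathbb{Z}^{n-3}=n-2$, and the upper bound from the L\"uck--Weiermann construction, reduced (via property (C), uniqueness of roots and (Max) for the pure subgroup) to bounding the normalizers and quotients $N(f)/\langle f\rangle$ of maximal infinite cyclic subgroups through a Nielsen--Thurston case analysis. The only remaining content is the explicit case-by-case estimate using the cutting homomorphism along the canonical reduction system, which the paper carries out and which your outline correctly identifies as the main work, including the complication caused by finite mapping classes permuting the complementary components.
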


We remark that $\Mod(S_0^n)$ is finite for $n\le 3$, and virtually-free if $n=4$ \cite{margalit}. As an immediate corollary of Theorem \ref{thm:main}, we will obtain: 

\begin{cor}
If $S\in \{S_1^2, S_2^0\}$, then $\gdvc \Mod(S) = \gdf \Mod(S) + 1$.
\label{cor:positgen}
\end{cor}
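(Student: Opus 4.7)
The plan is to reduce Corollary \ref{cor:positgen} to Theorem \ref{thm:main} via the Birman--Hilden correspondence, which realises both $\Mod(S_2^0)$ and $\Mod(S_1^2)$ as finite central extensions of mapping class groups of punctured spheres. For $S_2^0$ the hyperelliptic involution $\iota$ is central in $\Mod(S_2^0)$, and the classical Birman--Hilden theorem supplies the short exact sequence
\begin{equation*}
1\longrightarrow\langle\iota\rangle\longrightarrow\Mod(S_2^0)\longrightarrow\Mod(S_0^6)\longrightarrow 1,
\end{equation*}
with $\langle\iota\rangle\cong\mathbb{Z}/2$. For $S_1^2$, I would realise the twice-punctured torus as a double cover of $S_0^5$ branched over four of the five marked points and, possibly after restricting to the symmetric mapping class group as a finite-index subgroup to meet the Birman--Hilden hypotheses, produce an analogous finite-kernel extension relating $\Mod(S_1^2)$ to $\Mod(S_0^5)$. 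A quick sanity check is that the virtual cohomological dimensions match: $\vcd\Mod(S_2^0)=\vcd\Mod(S_0^6)=3$ and $\vcd\Mod(S_1^2)=\vcd\Mod(S_0^5)=2$, which is consistent with having a finite-kernel extension in each case.

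The second ingredient is the standard invariance of $\gdf$ and $\gdvc$ under extensions with finite kernel (and under passage to finite-index subgroups). Concretely, given $1\to F\to G\to Q\to 1$ with $F$ finite and $\f\in\{\fin,\vc\}$, any model $X$ for $E_\f Q$ becomes a $G$-model of the same dimension for $E_\f G$ after pullback along $G\to Q$, since preimages of subgroups in $\f$ remain in $\f$; the opposite inequality follows by taking the quotient $X/F$ of a $G$-model. Hence $\gdf G=\gdf Q$ and $\gdvc G=\gdvc Q$, and an analogous pullback/induction argument handles finite-index inclusions.

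Combining these two ingredients with Theorem \ref{thm:main} immediately yields $\gdvc\Mod(S)=\gdf\Mod(S)+1$ for $S\in\{S_1^2,S_2^0\}$. The main technical point lies in the $S_1^2$ case, where the hyperelliptic-type involution is not \emph{a priori} central in the full mapping class group of the twice-punctured torus; identifying the correct symmetric mapping class group and verifying that it sits inside $\Mod(S_1^2)$ with finite index is the only non-routine step, after which the invariance principles above close the argument.
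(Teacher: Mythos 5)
Your route is the same as the paper's: both arguments run the hyperelliptic short exact sequences $1\to\z_2\to\Mod(S_1^2)\to\Mod(S_0^5)\to 1$ and $1\to\z_2\to\Mod(S_2^0)\to\Mod(S_0^6)\to 1$ and then transfer Theorem \ref{thm:main} across the finite kernel. The gap is in the transfer. The pullback direction is fine: since the kernel is finite, a subgroup of $G$ lies in $\fin$ (resp.\ $\vc$) if and only if its image in $Q=G/F$ does, so a model for $E_{\vc}Q$ pulled back along $G\to Q$ is a model for $E_{\vc}G$ and $\gdvc G\le\gdvc Q$ (likewise for $\gdf$). But the reverse inequality does \emph{not} follow by ``taking the quotient $X/F$ of a $G$-model'': for $K\le Q$, a point $xF$ of $X/F$ is fixed by $K$ exactly when some subgroup $H\le p^{-1}(K)$ with $p(H)=K$ fixes $x$, so $(X/F)^K$ is the image of a \emph{union} of fixed sets $X^H$, which need not be contractible even though each $X^H$ is. Hence $X/F$ is in general not a model for $E_{\vc}Q$, and this is precisely the direction you need in order to upgrade the paper's equality for $\Mod(S_0^n)$ to an equality (rather than merely an upper bound) for $\Mod(S)$.

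The statement you want is true but is a genuine theorem (Lemma \ref{lem:extend}, due to Fluch--Nucinkis): if $1\to F\to G\to H\to 1$ with $F$ finite and $\gdvc G\ge 3$, then $\gdvc G=\gdvc H$. The dimension hypothesis is not cosmetic: the proof goes through Bredon cohomological dimension, which is what is actually invariant under finite-kernel quotients, and then invokes the coincidence of cohomological and geometric dimension in dimensions $\ge 3$. The hypothesis holds here because $\Mod(S_1^2)$ contains $\z^2$ and $\Mod(S_2^0)$ contains $\z^3$ (generated by Dehn twists along disjoint curves), so $\gdvc\Mod(S_1^2)\ge\gdvc\z^2=3$ and $\gdvc\Mod(S_2^0)\ge\gdvc\z^3=4$. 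In fact these same subgroups let you bypass the quotient step entirely for the lower bound: since $\gdf\Mod(S_1^2)=\vcd\Mod(S_1^2)=2$ and $\gdf\Mod(S_2^0)=\vcd\Mod(S_2^0)=3$ by Harer and \cite{AMP}, one gets directly $\gdvc\Mod(S)\ge\gdvc\z^{k}=k+1=\gdf\Mod(S)+1$, and the pullback upper bound $\gdvc\Mod(S)\le\gdvc\Mod(S_0^n)=\gdf\Mod(S_0^n)+1=\gdf\Mod(S)+1$ closes the argument. Either cite the lemma or make this substitution; the Birman--Hilden input and the centrality issue for $S_1^2$ you flag are standard and unproblematic.
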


At this point, we remark that the proper geometric dimension of $\Mod(S_g^n)$ is known \cite{AMP} to coincide with its virtual cohomological dimension, which in turn was computed by Harer \cite{Harer} and is an explicit linear function of $g$ and $n$ (in the particular case when $g=0$, it is equal to $n-3$).

\medskip

As a further corollary of Theorem \ref{thm:main} we calculate the exact value of the virtually-cyclic dimension of the spherical braid group $B_n$ on $n$ strands, for $n\in \{5,6\}$. Indeed, using the classical fact  that $B_n$ is a finite extension of $\Mod(S_0^n)$, we will obtain: 
\begin{cor}
If $n\in \{5,6\}$, then $\gdvc(B_n) = \gdf(B_n) + 1$.
\label{cor:braids}
\end{cor}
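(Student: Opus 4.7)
The plan is to reduce the statement to Theorem~\ref{thm:main} via the classical relation between the spherical braid group and the mapping class group of the punctured sphere: for $n\geq 4$, the center $Z$ of $B_n$ is cyclic of order two (generated by the full twist), and one has a central extension
\[
1 \longrightarrow Z \longrightarrow B_n \xrightarrow{\;\pi\;} \Mod(S_0^n) \longrightarrow 1.
\]
Since $Z$ is finite, a subgroup $H\leq B_n$ lies in $\vc_{B_n}$ (resp.\ $\fin_{B_n}$) if and only if $\pi(H)\in \vc_{\Mod(S_0^n)}$ (resp.\ $\fin_{\Mod(S_0^n)}$); equivalently $\pi^{-1}(\vc_{\Mod(S_0^n)})=\vc_{B_n}$ and $\pi^{-1}(\fin_{\Mod(S_0^n)})=\fin_{B_n}$.

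From this I would deduce the equalities $\gdf(B_n)=\gdf(\Mod(S_0^n))$ and $\gdvc(B_n)=\gdvc(\Mod(S_0^n))$. For the inequality ``$\leq$'' with $\f\in\{\fin,\vc\}$, pull a minimal model $X$ for $E_{\f}\Mod(S_0^n)$ back along $\pi$: this produces a $B_n$-CW-complex of the same dimension whose stabilizers are the $\pi$-preimages of stabilizers in $X$, and hence a model for $E_{\f}B_n$ by the preceding observation. For the reverse inequality, take a minimal model $Y$ for $E_{\f}B_n$ and consider $Y^Z$: since $Z$ is normal and finite, $Y^Z$ is a $\Mod(S_0^n)$-CW-complex of dimension at most $\dim Y$, and for $H\leq \Mod(S_0^n)$ the identity $(Y^Z)^H = Y^{\pi^{-1}(H)}$, together with the family correspondence above, shows that $Y^Z$ is a model for $E_{\f}\Mod(S_0^n)$.

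Combining these equalities with Theorem~\ref{thm:main} yields
\[
\gdvc(B_n)=\gdvc(\Mod(S_0^n))=\gdf(\Mod(S_0^n))+1=\gdf(B_n)+1
\]
for $n\in\{5,6\}$, as required. The argument is essentially formal given the central extension and the invariance of geometric dimensions under quotients by finite normal subgroups; the only point requiring any care is the lower-bound direction, which uses both the normality of $Z$ (so that $Y^Z$ retains a $\Mod(S_0^n)$-action) and its finiteness (so that $\pi$-preimages of virtually cyclic subgroups remain virtually cyclic). Neither ingredient is deep, so I expect no serious obstacle beyond assembling these standard facts.
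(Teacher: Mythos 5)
Your argument is correct and follows essentially the same route as the paper: reduce to Theorem \ref{thm:main} via the central extension $1 \to \mathbb{Z}_2 \to B_n \to \Mod(S_0^n) \to 1$ and the invariance of the relevant geometric dimensions under quotients by finite normal subgroups. The only difference is that you prove that invariance directly (pullback along $\pi$ for one inequality, passage to $Y^Z$ for the other), whereas the paper cites Lemma \ref{lem:extend} for the $\gdvc$ part; your self-contained version also covers the $\gdf$ equality explicitly and sidesteps that lemma's hypothesis $\gdvc G \geq 3$.
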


This latter result should be compared with a recent theorem of Flores and Gonz\'alez-Meneses \cite{FG}, which proves the analogous statement for braid groups of the disk, with an arbitrary number of strands.

In order to prove Theorem \ref{thm:main}, we will use a result of L\"uck-Weiermann \cite{luck-weiermann} (stated as Theorem \ref{thm-luck-weierm} below) which relates the virtually-cyclic dimension of a group $G$ to the proper dimension of certain subgroups associated to infinite-order elements of $G$. We then use the Nielsen-Thurston classification of mapping classes and a case-by-case analysis, to bound the dimension of such subgroups. 

\begin{rem}
The argument used in the proof of Theorem \ref{thm:main} will not generalize to spheres with an arbitrary number of punctures; see Remark \ref{rem:casuistic2} below for more details. 
In spite of this, the interested reader can check that an immediate adaptation of the proof of Theorem \ref{thm:main} for $n=6$ gives a direct proof of Corollary \ref{cor:positgen}, as well as of the analogous statement for $\Mod(S_1^3)$. In particular, we obtain that inequality \eqref{eq:main} is in fact an equality for all surfaces $S_g^n$ for which $3g - 3 + n \le 3$. 
\label{rem:casuistic}
\end{rem}

\medskip

For a general number of punctures, a recent result of Degrijse-Petrosyan \cite{DP} gives a bound for $\gdvc \Mod(S_g^n)$ which is linear in $g$ and $n$ ; see Proposition \ref{prop:DP} below. In the particular case when $g=0$, this bound takes the form:  

\begin{equation}
\gdvc \Mod(S_0^n) \le 3n -8 = 3 \cdot \gdf \Mod(S_0^n) +1 
\end{equation}

Using the aforementioned result of L\"uck-Weiermann \cite{luck-weiermann} with a theorem of Cameron-Solomon-Turull \cite{Cameron}, we will prove the following slightly improved bound:

\begin{thm}
Suppose $n\geq 4$. Let $b_n$ be the number of ones in the  binary expression of $n$. Then
\begin{equation*}
\gdvc \Mod(S_0^n)\leq  n-4+\left[\frac{3n-1}{2}\right]-b_n,  
\end{equation*} where $[\cdot]$ denotes integer part. 
\label{propo:bound}
\end{thm}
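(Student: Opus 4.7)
The plan is to combine L\"uck--Weiermann's theorem (Theorem~\ref{thm-luck-weierm} below) with a Nielsen--Thurston case analysis of the normalizers of maximal infinite virtually cyclic subgroups, using the Cameron--Solomon--Turull bound on subgroup chain lengths in $S_n$ for the symmetric part of those normalizers. Since $\gdf \Mod(S_0^n) = n - 3$ by Harer / Aramayona--Mart\'{\i}nez-P\'erez and is dominated (for $n \geq 4$) by the claimed bound, L\"uck--Weiermann reduces the problem to producing an upper bound on $\gdf(N_G[V]/V) + 1$, with $G = \Mod(S_0^n)$, uniform over representatives $V$ of commensurability classes of maximal infinite virtually cyclic subgroups of $G$.

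I would then split by the Nielsen--Thurston type of a generator $f$ of $V$. If $f$ is pseudo-Anosov, McCarthy's theorem yields that the commensurator of $\langle f \rangle$ is already virtually cyclic, so $N_G[V]/V$ is finite and this case is trivial. If $f$ is reducible with canonical reduction system $\sigma$, then $N_G[V]$ is contained with finite index in the stabilizer of $\sigma$, which fits into a standard short exact sequence whose kernel is generated by the Dehn twists along $\sigma$ together with $\prod_i \Mod(R_i)$, where the components $R_i = S_0^{n_i}$ of $S_0^n\smallsetminus\sigma$ are punctured spheres of smaller complexity, and whose quotient is a subgroup of the symmetric group $S_n$ permuting the punctures.

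From here I would bound $\gdf(N_G[V]/V)$ using subadditivity of $\gdf$ under group extensions: the direct-product subsurface part contributes at most $\sum_i(n_i - 3)$, the free abelian twist part modulo $V$ contributes at most $|\sigma|-1$, and the finite permutation quotient contributes at most the length of a maximal chain of subgroups in $S_n$. The Cameron--Solomon--Turull theorem yields this last chain length as $\lceil 3n/2\rceil - b_n - 1 = \left[\frac{3n-1}{2}\right] - b_n$. A short arithmetic check, using the linear constraint among the $n_i$ and $|\sigma|$ coming from the Euler characteristic of $S_0^n$, should collapse the subsurface-plus-twist contribution to at most $n - 5$, matching the claimed total of $n - 4 + \left[\frac{3n-1}{2}\right] - b_n$ after the L\"uck--Weiermann ``$+1$''.

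\textbf{Main obstacle.} The delicate point is the reducible case: the three separate contributions (subsurface mapping class groups, twist factor modulo $V$, and permutation quotient) must combine into a single sharp upper bound. In particular, Cameron--Solomon--Turull has to be applied to the symmetric-group quotient at the right step of the L\"uck--Weiermann recursion; substituting in the naive bound $|S_n|$, or even the obvious $n-1$, would only recover the weaker Degrijse--Petrosyan bound $3n - 8$. By comparison, the pseudo-Anosov case and the initial L\"uck--Weiermann reduction are routine.
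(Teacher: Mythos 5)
Your skeleton coincides with the paper's: reduce via L\"uck--Weiermann (in the form of Proposition \ref{prop:main}, using property (C), uniqueness of roots and (Max) for $\PMod(S_0^n)$ to restrict to maximal cyclic $\langle f\rangle\le\PMod(S_0^n)$), dispose of the pseudo-Anosov case by McCarthy, analyse the reducible case through the cutting homomorphism, and feed the puncture-permutation part into Cameron--Solomon--Turull. But the decisive step is missing: you never say \emph{how} the chain length $l(\Sigma_n)=\left[\frac{3n-1}{2}\right]-b_n$ enters additively into a bound for $\gdf W_{\Mod(S_0^n)}(f)$, and ``subadditivity of $\gdf$ under group extensions'' does not do it. In the extension $1\to W_{\PMod(S_0^n)}(f)\to W_{\Mod(S_0^n)}(f)\to A\to 1$ with $A\subseteq\Sigma_n$ finite, one has $\gdf A=0$, so naive subadditivity would predict no contribution at all from $A$ --- which is false --- while the general results actually available for finite quotients are multiplicative in the index (Theorem \ref{thm:finite-index-gd}) or require controlling $\gdf\tilde H$ for \emph{every} finite-index overgroup $\tilde H$ of the kernel (Theorem \ref{thm:seq-efin}). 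There is no general principle $\gdf Q\le \gdf N+l(Q/N)$, and proving one for these particular $Q$ is exactly the content you are omitting; without it you fall back on the multiplicative bound and recover nothing better than Degrijse--Petrosyan.

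The paper's mechanism is Mart\'inez-P\'erez's theorem (Theorem \ref{thm-CM}): $\gdf Q\le \vcd Q+l(Q)$ when $3\le\gdf Q<\infty$ and $l(Q)<\infty$. This is applied to $Q=W_{\Mod(S_0^n)}(f)$ together with two observations you would still need to make. First, $W_{\PMod(S_0^n)}(f)$ is torsion-free --- this uses maximality of $\langle f\rangle$ and uniqueness of roots in $\PMod(S_0^n)$ in an essential way --- so every finite subgroup of $Q$ injects into $A$ and hence $l(Q)\le l(\Sigma_n)$. Second, $\vcd Q=\vcd W_{\PMod(S_0^n)}(f)\le\gdf W_{\PMod(S_0^n)}(f)\le n-4$, which is your cutting-homomorphism computation (the correct total for the twist-plus-subsurface part is $n-|\sigma|-3$ plus $|\sigma|-1$, i.e.\ $n-4$, not $n-5$; no extra ``$+1$'' is then added, since Proposition \ref{prop:main} asks directly for $\gdf W_G(C)\le d$). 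With these two inputs Theorem \ref{thm-CM} gives $\gdf W_{\Mod(S_0^n)}(f)\le n-4+\left[\frac{3n-1}{2}\right]-b_n$ and the theorem follows; without a statement of this type your three contributions simply do not combine.
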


\begin{rem} Observe that $3n - 8 \geq n-4+\left[\frac{3n-1}{2}\right]-b_n$ for all $n\ge 4$, and that the inequality is strict for $n\ge 5$. 
\end{rem}

\noindent{\bf Pure mapping class groups of spheres}. As we will observe in Lemma \ref{lem-max-0} pure mapping class groups of spheres have property (Max), and hence inequality \eqref{eq:main} is satisfied. Moreover, we will remark in Proposition \ref{prop-gdvc-mcgn-1} that in this case we get an equality, in fact. 

\medskip

\noindent{\bf Surfaces with boundary.} It follows from the definition that $\Mod(S)$ is torsion-free whenever $S$ has boundary. Combining this with a number of results by various authors, quickly yields that  \eqref{eq:main} holds for surfaces with boundary. The argument is essentially contained in the paper by Flores and Gonz\'alez-Meneses \cite{FG}; we offer a short account in the Appendix.

\bigskip

\noindent{\bf Acknowledgements.} J. A. was partially supported by grants RYC-2013-13008 and MTM2015-67781. D. Juan-Pineda and A. Tru\-ji\-llo-Negrete were partially supported by CONCAYT FORDECYT 265667. We would like to thank Yago Antol\'in, John Guaschi and Conchita Mart\'inez for conversations.


\section{Mapping class groups and braid groups} 
\label{sec:defs} 
In this section we give some preliminaries on mapping class groups, and their relation with braid groups. We refer the reader to \cite{margalit} for a thorough discussion on these and related topics.

\subsection{Mapping class groups} Let $S$ be a (possibly disconnected) orientable surface with empty boundary  and negative Euler characteristic, so that it supports a complete hyperbolic metric of finite area. Sometimes it will be convenient to regard (some of) the punctures of $S$ as marked points, and we will switch between the two points of view without further mention. 
As mentioned above, we will write $S_g^n$ to denote the connected surface of genus $g$ with $n$ marked points. 

The {\em mapping class group} $\Mod(S)$ is the group of isotopy classes of self-homeomorphisms of $S$; elements of $\Mod(S)$ are called {\em mapping classes}. The {\em pure mapping class group} $\PMod(S)$ is the subgroup of $\Mod(S)$ whose elements send every marked point to itself; observe that $\PMod(S)$ has finite index in $\Mod(S)$. 

Since we will deal mainly with surfaces of genus zero, from now on we will restrict our attention to the case of $S=S_0^n$, with $n\ge 3$.

\subsubsection{Curves and multicurves} By a {\em curve} on $S_0^n$ we mean the (free) isotopy class of a simple closed curve that does not bound a disk with at most one marked point. A {\em multicurve} is then a set of curves that {\em pairwise disjoint}, i.e. they may be realized in a disjoint manner on $S_0^n$. An easy counting argument shows that a maximal multicurve on $S_0^n$ has $n-3$ elements. 

\subsubsection{Nielsen-Thurston classification} We say that $f \in \Mod(S_0^n)$ is {\em reducible} if there exists a multicurve $\sigma \subset S_0^n$ such that $f(\sigma)=\sigma$; otherwise, we say that $f$ is {\em irreducible}. A notable example of a reducible element is the {\em Dehn twist} $T_\alpha$ about the curve $\alpha$; see \cite{margalit} for definitions and properties of Dehn twists.  Finally, we note that finite-order elements of $\Mod(S_0^n)$ may be reducible or irreducible. 

The celebrated {\em Nielsen-Thurston classification} of mapping classes asserts that an irreducible element of infinite order has a representative which is a {\em pseudo-Anosov} homeomorphism; see \cite[Ch. 5]{margalit} for details. For this reason, irreducible mapping classes of infinite order are normally referred to as {\em pseudo-Anosov} mapping classes. 

\subsubsection{Canonical reduction system}  Note that, in general, a reducible mapping class may fix more than one multicurve. For this reason, we define the {\em canonical reduction system} of a mapping class as the intersection of all the multicurves that it fixes. For instance,  the canonical reduction system of the Dehn twist $T_\alpha$ is equal to $\alpha$.

\subsubsection{The cutting homomorphism} Let $\sigma$ be a multicurve on $S_0^n$, and consider $(\Mod(S_0^n))_\sigma=\{g \in\Mod(S_0^n) | g(\sigma)=\sigma \}$. 
Denote by $S_0^n - \sigma$ the (disconnected) surface which results from removing from $S_0^n$ a closed regular neighbourhood of each element of $\sigma$. Write $S_0^n - \sigma= Y_1\sqcup \ldots\sqcup Y_k$, observing that each $Y_j$ is a sphere with marked points. 
There is an obvious surjective homomorphism \[(\Mod(S_0^n))_\sigma \to \Mod(\sqcup_i Y_i, \sigma), \] called the {\em cutting homomorphism} associated to $\sigma$. Here, $\Mod(\sqcup_i Y_i, \sigma)$ denotes the subgroup of $\Mod(\sqcup_i Y_i)$ whose elements preserve the set of punctures of $\sqcup_i Y_i$ that correspond to elements $\sigma$.  The cutting homomorphism fits in a short exact sequence

\begin{align}
\label{def-cut-hom}
1 \to T_\sigma \to  (\Mod(S_0^n))_\sigma \to \Mod(\sqcup_i Y_i,\sigma) \to 1
    \end{align}
   where $T_\sigma$ is
the free abelian group  generated by the Dehn twists along the elements of $\sigma$. 

Armed with these definitions, we can give a {\em canonical form} for elements of $\PMod(S_0^n)$. More concretely, let $f \in \PMod(S_0^n)$, and write $\sigma$ for its canonical reduction system, so that $f\in (\Mod(S_0^n))_\sigma$. Again, let  $S_0^n - \sigma= Y_1\sqcup \ldots\sqcup Y_k$. Since $f$ is pure, it follows $f(\alpha) = \alpha$ for every $\alpha \in \sigma$; also, $f(Y_i) = Y_i$ for every $i$. From this discussion, and using the Nielsen-Thurston classification, we have deduced: 

\begin{lem}
With the notation above, the image of $f\in \PMod(S_0^n)$ under the cutting homomorphism \eqref{def-cut-hom} belongs to 
$\PMod(Y_1) \times \cdots \times \PMod(Y_k)$. Moreover, the projection of this image onto each factor is either the identity or pseudo-Anosov.
\label{lem:purecut}
\end{lem}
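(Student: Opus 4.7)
The plan is to establish the two conclusions of the lemma separately, using the observations immediately preceding the statement, the Nielsen--Thurston trichotomy, and the defining property of the canonical reduction system.

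For the first conclusion, I would inspect the marked points of each component $Y_i$. They come in two kinds: the original marked points of $S_0^n$ lying in $Y_i$, each of which is fixed individually because $f\in\PMod(S_0^n)$; and the new marked points arising from collapsing the boundary components of $Y_i$ to points, one for each element of $\sigma$ adjacent to $Y_i$. The hypotheses $f(\alpha)=\alpha$ for every $\alpha\in\sigma$ and $f(Y_i)=Y_i$ together force each side of each boundary curve to be preserved, hence every new marked point to be fixed. Consequently the image of $f$ under the cutting homomorphism lies in the pure subgroup $\PMod(Y_1)\times\cdots\times\PMod(Y_k)$ of $\Mod(\sqcup_j Y_j,\sigma)$.

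For the second conclusion, I would apply the Nielsen--Thurston classification to each projection $f_i\in\PMod(Y_i)$. When $Y_i$ has at most two marked points (counting collapsed boundary points) the factor $\PMod(Y_i)$ is trivial and there is nothing to check. Otherwise $f_i$ is finite-order, reducible, or pseudo-Anosov; the pseudo-Anosov case is an allowed outcome. The finite-order case forces $f_i=1$: by Nielsen realization $f_i$ is represented by an orientation-preserving finite-order self-homeomorphism of $S^2$, which is conjugate to a rotation and therefore has exactly two fixed points on $S^2$; this is incompatible with fixing three or more marked points.

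The main obstacle is ruling out the possibility that $f_i$ is reducible and nontrivial. For this I would appeal to the characterization of the canonical reduction system (in the form of Birman--Lubotzky--McCarthy, see e.g.\ \cite[Ch.~13]{margalit}): $\sigma$ is characterized as the minimal multicurve fixed by $f$ with the property that the restriction of $f$ to each complementary component is either the identity or pseudo-Anosov. If $f_i$ were reducible and nontrivial then its own nonempty canonical reduction system $\tau_i\subset Y_i$, viewed as a multicurve in $S_0^n$, would be disjoint from $\sigma$ and fixed by $f$, so $\sigma\cup\tau_i$ would be a strictly larger fixed reduction system; this contradicts the minimality of $\sigma$ and rules out the reducible case, completing the proof.
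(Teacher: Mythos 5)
Your overall route is the same as the paper's, which states this lemma with only a sketch (``since $f$ is pure it fixes each $\alpha\in\sigma$ and each $Y_i$, and now use the Nielsen--Thurston classification''); your bookkeeping of old and new marked points for the first conclusion and your elimination of nontrivial finite-order components (a nontrivial finite-order orientation-preserving homeomorphism of $S^2$ is a rotation with exactly two fixed points, so it cannot fix three or more marked points) are correct and are welcome details the paper omits. The problem is the reducible case, which is where the actual content of the lemma sits, and your argument there does not work as written. First, the existence of the strictly \emph{larger} invariant multicurve $\sigma\cup\tau_i$ contradicts nothing: minimality of $\sigma$ would only be violated by a strictly \emph{smaller} multicurve with the stated property, and larger invariant reduction systems exist whenever some component restriction is trivial (e.g.\ $T_\alpha$ fixes $\{\alpha,\beta\}$ for any $\beta$ disjoint from $\alpha$). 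Second, the characterization you invoke --- ``$\sigma$ is the minimal invariant multicurve such that the restriction to each complementary component is the identity or pseudo-Anosov'' --- already \emph{is} the conclusion of the lemma (if $\sigma$ is by definition such a multicurve, there is nothing left to prove), so the step is circular rather than a proof. The non-circular repair is the Birman--Lubotzky--McCarthy argument: if $f|_{Y_i}$ were reducible of infinite order, the curves of its (nonempty) canonical reduction system $\tau_i$, viewed in $S_0^n$, are \emph{essential reduction curves} for $f$ itself, and hence must belong to the canonical reduction system $\sigma$ of $f$; this contradicts the fact that they lie in the interior of $Y_i$ and so are disjoint from, and not contained in, $\sigma$. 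Alternatively, simply cite the canonical-form statement (Farb--Margalit, Cor.~13.3, or Birman--Lubotzky--McCarthy) as a black box, which is in effect what the paper does.
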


\subsubsection{Normalizers}
We will use the following well-known result about normalizers of pseudo-Anosov elements: 

\begin{lem}
Let $f\in \Mod(S_0^n)$ be a pseudo-Anosov. Then its normalizer $N_{\Mod(S_0^n)}(f)$ is virtually cyclic.
\label{lem:pA-norm}
\end{lem}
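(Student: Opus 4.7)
The plan is to exploit the rigidity of the invariant foliations of a pseudo-Anosov mapping class. Recall that $f$ comes equipped with a unique pair of transverse projective measured foliations $(\mathcal{F}^u,\mathcal{F}^s)$ and a dilatation $\lambda>1$, with $f$ scaling the transverse measures on $\mathcal{F}^u$ and $\mathcal{F}^s$ by $\lambda$ and $\lambda^{-1}$, respectively.

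First I would reduce to the centralizer $C(f)$. If $g\in N_{\Mod(S_0^n)}(\langle f\rangle)$, then $gfg^{-1}=f^k$ for some $k\in\mathbb{Z}$; comparing dilatations (equivalently, translation lengths on Teichm\"uller space) forces $|k|=1$, so $C(f)$ sits inside the normalizer as a subgroup of index at most two. It therefore suffices to show that $C(f)$ is virtually cyclic.

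Next, each $g\in C(f)$ must permute the $f$-invariant projective measured foliations; passing to a further index-two subgroup $C^{+}\le C(f)$ I may assume that $g$ fixes each of $\mathcal{F}^u$ and $\mathcal{F}^s$ as a projective class. Sending $g$ to the factor by which it scales the transverse measure on $\mathcal{F}^u$ then defines a homomorphism $\mu\colon C^{+}\to\mathbb{R}_{>0}$. The kernel of $\mu$ consists of mapping classes fixing $\mathcal{F}^u$ as a measured foliation, and a standard argument (train-track carrying, or the PL structure on $\mathcal{MF}$ of Fathi-Laudenbach-Poénaru) shows that this kernel is finite. On the other hand, every nontrivial element of $C^{+}$ is itself pseudo-Anosov with dilatation $\mu(g)$, and the set of pseudo-Anosov dilatations on $S_0^n$ is discrete in $\mathbb{R}_{>0}$; hence the image of $\mu$ is a discrete subgroup of $\mathbb{R}_{>0}$ and therefore infinite cyclic or trivial. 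Combining these two facts, $C^{+}$ is virtually cyclic, whence so is $N_{\Mod(S_0^n)}(\langle f\rangle)$.

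The main obstacle is verifying the two analytic inputs to the last step: finiteness of $\ker\mu$ and discreteness of its image. Both are classical consequences of the theory of measured foliations and train tracks (extracted, for instance, from McCarthy's work on centralizers in mapping class groups), and I would simply cite them rather than reprove them. A more modern alternative I would consider, if a self-contained proof were required, is to invoke the loxodromic action of $f$ on the hyperbolic curve complex $\mathcal{C}(S_0^n)$ together with the WPD property of Bestvina-Fujiwara: the normalizer of $\langle f\rangle$ setwise fixes the pair of endpoints of $f$ in $\partial\mathcal{C}(S_0^n)$, and stabilizers of pairs of loxodromic endpoints of WPD actions are automatically virtually cyclic.
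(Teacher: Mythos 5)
The paper does not prove this lemma at all: it is quoted as a well-known fact (it is due to McCarthy, and is also recorded in Farb--Margalit and Ivanov's survey), so there is no in-paper argument to compare against. Your sketch is a correct reconstruction of the classical proof. The reduction from the normalizer of $\langle f\rangle$ to the centralizer via comparison of dilatations is right; the homomorphism $\mu$ to the multiplicative reals, the finiteness of its kernel (an element scaling $\mathcal{F}^u$ by $a$ and $\mathcal{F}^s$ by $b$ must satisfy $ab=1$ by invariance of $i(\mathcal{F}^u,\mathcal{F}^s)$, so the kernel acts by isometries of the associated singular flat structure and is finite), and the discreteness of the image via discreteness of the dilatation spectrum together give that the centralizer is finite-by-cyclic, hence virtually cyclic. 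The alternative you mention via the WPD/acylindrical action on the curve complex is equally valid. Two cosmetic points: (1) the passage to an index-two subgroup $C^{+}$ of the centralizer is unnecessary, since any $g$ commuting with $f$ sends the attracting fixed point of $f$ in $\mathcal{PMF}$ to the attracting fixed point of $gfg^{-1}=f$, so all of $C(f)$ already fixes $[\mathcal{F}^u]$ and $[\mathcal{F}^s]$ individually (the swap can only occur for elements conjugating $f$ to $f^{-1}$); and (2) the assertion that \emph{every} nontrivial element of $C^{+}$ is pseudo-Anosov is false for the nontrivial finite-order elements of $\ker\mu$ --- what you need, and what is true, is that every $g$ with $\mu(g)\neq 1$ is pseudo-Anosov with dilatation $\max\{\mu(g),\mu(g)^{-1}\}$, which suffices for the discreteness claim. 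Neither point affects the validity of the argument.
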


It is also possible to describe the normalizer of a  multitwist. Indeed, observe that, for any $f\in \Mod(S_0^n)$, we have $fT_\sigma f^{-1} = T_{f(\sigma)}$. In particular, we obtain:

\begin{lem}
For any multicurve $\sigma$, $N_{\Mod(S_0^n)} (T_\sigma) = \Mod(S_0^n)_\sigma$.
\label{lem:twistnorm}
\end{lem}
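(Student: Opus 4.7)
\textbf{Proof plan for Lemma \ref{lem:twistnorm}.} The strategy is a straightforward double inclusion. I will split the argument into the two containments, using the conjugation identity $fT_\alpha f^{-1}=T_{f(\alpha)}$ already recorded in the paper, extended from a single Dehn twist to the multitwist subgroup $T_\sigma$.

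First, to show $\Mod(S_0^n)_\sigma \subseteq N_{\Mod(S_0^n)}(T_\sigma)$, I would take $f \in \Mod(S_0^n)_\sigma$, so by definition $f(\sigma)=\sigma$. Since conjugation by $f$ is a group automorphism and $T_\sigma$ is generated by the Dehn twists $T_\alpha$ with $\alpha \in \sigma$, the identity $f T_\alpha f^{-1}=T_{f(\alpha)}$ shows that the image of each generator lies again in $T_\sigma$. It follows that $fT_\sigma f^{-1} \subseteq T_\sigma$, and the same argument applied to $f^{-1}$ gives the reverse inclusion. Hence $f$ normalizes $T_\sigma$.

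For the reverse containment $N_{\Mod(S_0^n)}(T_\sigma)\subseteq \Mod(S_0^n)_\sigma$, I would take $f \in N_{\Mod(S_0^n)}(T_\sigma)$, so $fT_\sigma f^{-1}=T_\sigma$. Conjugating the generators and using the same identity, this subgroup is equal to $T_{f(\sigma)}$, the free abelian group generated by Dehn twists along the curves in $f(\sigma)$. The crucial step is now to recover the multicurve from the subgroup: given a non-trivial multitwist $T_{\alpha_1}^{n_1}\cdots T_{\alpha_k}^{n_k}$ with $n_i \neq 0$, its canonical reduction system (as introduced earlier in the paper) is exactly $\{\alpha_1,\dots,\alpha_k\}$. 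Applying this to a generic element of the abelian group $T_\sigma = T_{f(\sigma)}$ forces $\sigma=f(\sigma)$ as sets of isotopy classes, and therefore $f\in \Mod(S_0^n)_\sigma$.

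The only nontrivial point in this argument is the final step, namely that the subgroup $T_\sigma$ (or equivalently the multitwist along $\sigma$) determines $\sigma$. I would invoke this as a standard fact about Dehn twists, referring to \cite{margalit}; alternatively, one can argue directly that the Dehn twists $T_\alpha$ with $\alpha \in \sigma$ form a $\mathbb{Z}$-basis of $T_\sigma$, and the set of Dehn twists inside a free abelian group of multitwists is intrinsically defined (as the set of primitive elements fixing, up to isotopy, all but one curve of $\sigma$). This is the only place where one has to go slightly beyond formal manipulation of the relation $fT_\alpha f^{-1}=T_{f(\alpha)}$, and is the main (minor) obstacle in the proof.
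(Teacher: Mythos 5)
Your proof is correct and follows essentially the same route as the paper, which simply records the identity $fT_\sigma f^{-1}=T_{f(\sigma)}$ and leaves the rest implicit. You correctly isolate the one genuinely non-formal step --- that the subgroup $T_\sigma$ determines the multicurve $\sigma$ (e.g.\ via canonical reduction systems of multitwists) --- which is exactly what the paper's one-line observation tacitly relies on.
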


\subsection{Braid groups} 
Given $n\ge 0$, we denote by $F_n$ the {\em configuration space} of $n$ distinct points on a sphere.
Note that the symmetric group $\Sigma_n$ acts on $F_n$ by permutation the coordinates; the quotient space $ J_n=F_n/\Sigma_n$ may then be regarded as the configuration space of $n$ {\em unordered} points on the sphere. Birman \cite[Prop 1.1]{birman} proved that the natural projection $ F_n\to J_n$ is a regular $(n!)$-fold covering map 
 
We define the {\em $n$-strand spherical braid group} as $B_n=\pi_1(J_n)$, and its \emph{pure} subgroup  as $P_n= \pi_1(F_n) < \pi_1(J_n)$.

As mentioned in the introduction,  braid groups are strongly related to mapping class groups of spheres. More concretely, for $n\ge 3$ there is a short exact sequence (see, for instance, \cite[Section 9.4.2]{margalit}): 

\begin{equation}\label{suc-ex-sphere}
1	\to \z_ 2 \to B_n \to \Mod(S_0^n) \to 1
\end{equation}
where $\z_2$ is generated by the  full twist braid, $\Delta_n$, of  $B_n$ and it generates the center of $B_n$.  In turn, for pure braid groups we have:

\begin{equation}\label{suc-ex-sphere-pure}
1	\to \z_ 2 \to P_n \to \PMod(S_0^n) \to 1
\end{equation}

\section{General results on geometric dimension} 
In this section we introduce the main ingredient in our proofs, namely the result of L\"uck-Weiermann \cite{luck-weiermann} stated as Theorem \ref{thm-luck-weierm} below.

\subsection{The main tool}
Let $G$ be a group, and $\C_G$ the family of infinite, virtually-cyclic subgroups of $G$. After \cite{luck-cat(0)-vc} and \cite{luck-weiermann}, we define an  equivalence relation $\sim$ on $\C_G$ by: 

\begin{equation}\label{rel-eq}
C\sim D  \iff |C\cap D|=\infty .
\end{equation}

Let $[\C_G]$ denote the set of equivalence classes and by $[C]$ the equivalence class of  $C\in \C_G$.      
The normalizer of $[C]$ is defined as:
  \begin{equation} \label{def-comm}
   N_{G}[C]:=\{g\in G \mid |gCg^{-1}\cap C|=\infty \};
  \end{equation}
  in other words, it is the commensurator of $C$ in $G$.  We define the following family of subgroups of $N_G[C]$:
   \begin{equation}\label{gh}
   \g_G[C]=\{H\in \vc_{N_G[C]} \mid  |H:H\cap C|<\infty\} \cup
   \fin_{ N_G[C]}.
   \end{equation}

After all these definitions, we are ready to give L\"uck-Weiermann's bound from \cite{luck-weiermann}:    
   
\begin{thm}\cite[Thm. 2.3]{luck-weiermann} \label{thm-luck-weierm}
Let $\C_G$ and $\sim$ be as above. Let $\mathfrak{I}$ be a complete system of representatives, $[H]$, of the $G$-orbits in $[\C_G]$ under the $G$-action coming from conjugation. Suppose there exists $d\in \mathbb{N}$ with the following properties: 
\begin{enumerate}
\item $\gdf G \leq d$,
\item $\gdf N_{G}[H] \leq d-1$, and
\item  $\gd_{\g[H]}N_{G}[H] \leq d$,
\end{enumerate}
  for each $[H]\in \mathfrak{I}$.
Then $\gdvc G\leq d$.  
\end{thm}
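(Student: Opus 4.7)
The plan is to construct a model for $\evc$ as a $G$-cellular pushout that glues the given model of $\efin$ to inductions of models associated with each normalizer $N_G[H]$. The underlying idea is that the defect between $\fin_G$ and $\vc_G$ is measured by $\C_G$, which the relation $\sim$ partitions into classes whose $G$-orbits are indexed by $\mathfrak{I}$. For each class $[H]$, the attached cells will force the $K$-fixed point set of the total space to become contractible whenever $K\in \C_G$ belongs to $[H]$, without spoiling the fixed-point structure for finite subgroups; hypotheses (1)--(3) guarantee that the ingredients needed for this attachment exist in the desired range of dimensions.

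Concretely, by (1) choose a $G$-CW model $X$ for $\efin$ of dimension $\le d$, and by (2)--(3), for each representative $[H]\in \mathfrak{I}$, choose $N_G[H]$-CW models $A_{[H]}$ for $\efine N_G[H]$ of dimension $\le d-1$ and $B_{[H]}$ for $E_{\g_G[H]} N_G[H]$ of dimension $\le d$. Since every isotropy group of $G\times_{N_G[H]} A_{[H]}$ lies in $\fin_G$, the universal property of $\efin$ furnishes a $G$-equivariant map $G\times_{N_G[H]} A_{[H]} \to X$; similarly, the inclusion $\fin_{N_G[H]}\subseteq \g_G[H]$ induces a map $A_{[H]}\to B_{[H]}$, which I promote to a $G$-equivariant map after inducing up. I then form the $G$-cellular pushout
\[
\xymatrix{
\coprod_{[H]\in \mathfrak{I}} G\times_{N_G[H]} A_{[H]} \ar[r] \ar[d] & X \ar[d] \\
\coprod_{[H]\in \mathfrak{I}} G\times_{N_G[H]} B_{[H]} \ar[r] & P,
}
\]
arranging the left vertical to be a cofibration by a mapping-cylinder replacement if necessary. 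The standard dimension formula then gives $\dim P \le \max\{d,\, d,\, (d-1)+1\} = d$.

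The main task, and the hardest step, is to verify that $P$ is in fact a model for $\evc$, i.e.\ that $P^K$ is contractible for $K\in \vc_G$ and empty otherwise. For $K\notin \vc_G$, all outer terms of the pushout have empty $K$-fixed points, hence $P^K=\emptyset$. For $K$ finite, $X^K$ is contractible and the added pieces also have contractible $K$-fixed point sets since $\fin_{N_G[H]}\subseteq \g_G[H]$; a Mayer--Vietoris/van Kampen argument shows $P^K$ remains contractible. The delicate case is $K\in \C_G$: such a $K$ lies in a unique class of $[\C_G]$ and, up to $G$-conjugation, inside exactly one $N_G[H]$. A careful fixed-point decomposition of the pushout must show that only the piece labelled by that class contributes nontrivially, and on it $K\in \g_G[H]$ so that $B_{[H]}^K$ is contractible while the contributions of $X^K$ and the other pieces collapse consistently. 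I expect this last fixed-point analysis, in particular verifying that distinct equivalence classes produce disjoint (or collapsing) contributions to $P^K$, to be the main technical obstacle; the definitions of $\sim$, of $N_G[H]$ as a commensurator, and of $\g_G[H]$ as those virtually-cyclic subgroups commensurable with $H$ are each tailored precisely so that this case-check works cleanly.
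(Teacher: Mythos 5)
The paper does not prove this theorem itself—it is quoted verbatim from L\"uck--Weiermann \cite[Thm.\ 2.3]{luck-weiermann}—and your pushout construction, with the dimension count $\max\{d,d,(d-1)+1\}=d$ and the fixed-point case analysis over $K\notin\vc_G$, $K$ finite, and $K\in\C_G$, is exactly the argument given in that source. Your outline is correct and identifies the genuinely delicate step (the $K\in\C_G$ fixed-point computation, which works because $K$ has empty fixed sets on every induced piece except the one indexed by $[K]$, where $K\in\g_G[H]$ forces contractibility); nothing further is needed.
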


Under certain circumstances, Theorem \ref{thm-luck-weierm} becomes a lot easier to work with. In this direction, say that  a group $G$ has {\em property (C)} (for ``conjugation") if, whenever  $f,g\in G$ are elements of infinite order with $gf^mg^{-1}=f^k$, we have that $|m|=|k|$. If $G$ has property (C), then  \cite[Lem. 4.2]{luck-cat(0)-vc} yields that for any $C\in \C_G$,   \[N_G(C)\subseteq N_G(2!C) \subseteq N_G(3!C)\subseteq \cdots, \] where $k!C=\{h^{k!}\mid h\in C \}$ and   $N_G[C]= \cup_{k\geq 1}N_G(k!C)$. 

Finally, say that $G$ has the property of {\em uniqueness of roots} if  for any $f,g\in G$ such that $f^n = g^n$ implies that $f=g$. We have:

\begin{prop} \label{prop:main} Suppose $G$ satisfies property (C) and has a finite index normal subgroup $H$ with the property of uniqueness of  roots.  If for any $C\in \C_H$ we have 
	\begin{enumerate}
		\item[(i)] $\gdf G \leq d$,
		\item[(ii)] $\gdf N_G(C)\leq d-1$, 
	\item[(iii)] $\gdf W_G(C) \leq d$,
	\end{enumerate}
where $W_G(C)=N_G(C)/C $. Then $\gdvc G \leq d$.

\end{prop}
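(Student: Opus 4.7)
The plan is to apply Theorem \ref{thm-luck-weierm} with the given value of $d$; this amounts to verifying its three dimension bounds on a suitable complete system $\mathfrak{I}$ of representatives of the $G$-orbits in $[\C_G]$. First, I would place the representatives inside $\C_H$. The hypothesis of uniqueness of roots in $H$, applied with one of the variables equal to the identity, forces $H$ to be torsion-free; hence every infinite virtually-cyclic subgroup of $H$ is infinite cyclic. Given any $C\in \C_G$, the intersection $C\cap H$ has finite index in $C$, so it belongs to $\C_H$, is infinite cyclic, and satisfies $C\sim C\cap H$. Thus $\mathfrak{I}$ may be chosen to consist of classes $[C]$ with $C=\langle c\rangle\leq H$ infinite cyclic.

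The main technical step, and the one I expect to require the most care, is to show that for such $C$ the commensurator coincides with the ordinary normalizer, $N_G[C]=N_G(C)$. Property (C) gives $N_G[C]=\bigcup_{k\geq 1} N_G(k!\,C)$, so it is enough to check $N_G(k!\,C)\subseteq N_G(C)$. If $g\in N_G(k!\,C)$, then $gc^{k!}g^{-1}=c^{\pm k!}$. Since $H$ is normal in $G$ and $c\in H$, both $gcg^{-1}$ and $c^{\pm 1}$ lie in $H$ and have equal $k!$-th powers, so uniqueness of roots in $H$ forces $gcg^{-1}=c^{\pm 1}$, i.e.\ $g\in N_G(C)$. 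This is the step where properties (C) on $G$ and uniqueness of roots on $H$ combine, and it is really the only non-formal input in the argument.

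Once $N_G[C]=N_G(C)$ is in hand, the three hypotheses of Theorem \ref{thm-luck-weierm} are quick. Condition (1) is hypothesis (i), and condition (2) is hypothesis (ii). For condition (3), I would observe that a subgroup $K\leq N_G(C)$ belongs to $\mathcal{G}[C]$ exactly when its image $\bar K$ in $W_G(C)=N_G(C)/C$ is finite: indeed $\bar K$ is finite iff $|K:K\cap C|<\infty$, and in that case the cyclic kernel $K\cap C$ forces $K$ itself to be virtually cyclic. Then a model of dimension at most $d$ for $\efine W_G(C)$, provided by (iii), can be pulled back along the quotient $N_G(C)\twoheadrightarrow W_G(C)$ to an $N_G(C)$-CW-complex of the same dimension whose $K$-fixed set coincides with the $\bar K$-fixed set of the original, hence is contractible precisely when $K\in \mathcal{G}[C]$ and empty otherwise. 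This supplies a model for $E_{\mathcal{G}[C]}N_G[C]$ of dimension $\leq d$, and Theorem \ref{thm-luck-weierm} then yields $\gdvc G\leq d$.
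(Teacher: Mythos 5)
Your proof is correct and follows essentially the same route as the paper's: reduce to representatives in $\C_H$, use property (C) together with uniqueness of roots and normality of $H$ to identify the commensurator $N_G[C]$ with the ordinary normalizer $N_G(C)$, and pull back a model for $\efine W_G(C)$ along $N_G(C)\twoheadrightarrow W_G(C)$ to verify condition (3) of Theorem \ref{thm-luck-weierm}. The only difference is cosmetic (you pass to $C\cap H$ where the paper passes to $k!C$), and you supply details the paper leaves implicit.
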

\begin{proof}
We will use Theorem \ref{thm-luck-weierm}. Since $H$ is a normal subgroup of finite index with the property of uniquneness of  roots, we have  $ N_G(D)=N_G(t D)$,
for any $D\in \C_H$ and any $t \in \z\setminus\{0\}$

Let $C\in \C_G$. Combining this with  \cite[Lem. 4.2]{luck-cat(0)-vc},  we have that $N_G[C]=N_G(k!C)$ for some $k\in \z\setminus \{0\}$ and   $k!C\in\C_H $.  Thus we may assume that $C\in \C_H$  and  $N_G[C]=N_G(C)$.  Further, a model for $\efine W_G(C) $ is a model for $E_{\g_G}N_G[C]$ with the action given from the projection $p\colon N_G(C) \to W_G(C)$. 
Applying Theorem \ref{thm-luck-weierm} we conclude the Proposition. 
\end{proof}
 
 \begin{rem}\label{rem-prop-max}
 Let $G$ and $H$ be as in Proposition \ref{prop:main}. Suppose that $H$  satisfies  property (Max).    Note that if $D\in \C_H$  and $D_{max}\in \C_H$ is the  maximal cyclic subgroup containing $D$,  then $N_G(D)=N_G(D_{max})$;  this follows from the property of uniqueness of roots and because $H$ is a normal subgroup of finite index in $G$.  
 Therefore,  in Proposition \ref{prop:main} we may assume that $C\in \C_H$ is maximal in $C_H$.
%
 \end{rem}

\subsection{On proper geometric dimension}
In the light of Proposition \ref{prop:main}, in order to estimate the virtually-cyclic dimension, one needs to be able estimate proper geometric dimension. With this motivation, we now present a number of known results about proper geometric dimension. 

First, an immediate consequence of the definition of proper geometric dimension is that, for any two groups $G_1,G_2$, one has\\
\begin{equation}\label{efin-product}
\gdf(G_1\times G_2)\leq \gdf G_1  + \gdf G_2.
\end{equation}
Another observation is that if $H$ is a subgroup of a group $G$, then 
\begin{equation}\label{efin-subgroup}
\gdf H \le \gdf G. 
\end{equation}

Next, a result of Karrass-Pietrowski-Solitar \cite{KPS} implies that the Bass-Serre tree of a virtually-free group $G$ is a model for $\efin$. In other words, we have:

\begin{lem}\label{lem:efin-vf}
Let $G$ be a virtually-free group. Then $\gdf G \le 1$, with equality if and only if $G$ is infinite. 
\end{lem}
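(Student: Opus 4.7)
The plan is to produce an explicit $1$-dimensional model for $\efin$ and then handle the $0$-dimensional case separately. First, assume $G$ is infinite and virtually-free. By the theorem of Karrass-Pietrowski-Solitar \cite{KPS}, $G$ is isomorphic to the fundamental group of a graph of finite groups; consequently $G$ acts without inversions on the associated Bass-Serre tree $T$, and every vertex and edge stabilizer is finite.

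The next step is to verify that $T$ is a model for $\efin$. On the one hand, any $H\le G$ fixing a point of $T$ is contained in a (finite) cell stabilizer, so $T^H=\emptyset$ whenever $H$ is infinite. On the other hand, if $H$ is finite, then Serre's fixed-point theorem gives $T^H\neq\emptyset$; since the fixed-point set of a simplicial action on a tree is itself a subtree, $T^H$ is contractible. This shows $\gdf G\le\dim T=1$.

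For the equality statement, it suffices to characterize when $\gdf G=0$. A $0$-dimensional model for $\efin$ is a discrete $G$-set $X$ whose $H$-fixed set is a single point for every finite $H\le G$. Applying this to the trivial subgroup $H=\{1\}\in\fin_G$ forces $X$ to consist of a single point, which must then be fixed by all of $G$; hence $G$ is finite. Conversely, when $G$ is finite, a point is plainly a model for $\efin$, so $\gdf G=0$. Combined with the upper bound, this yields $\gdf G=1$ precisely when $G$ is infinite and virtually-free.

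The only subtle point is the verification that the Bass-Serre tree of a graph of finite groups is a model for $\efin$; however, as indicated above, this reduces to Serre's fixed-point theorem together with the elementary observation that the fixed-point set of a group acting simplicially on a tree is a subtree.
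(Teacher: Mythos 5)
Your proposal is correct and follows exactly the route the paper intends: the paper gives no written proof, simply asserting that Karrass--Pietrowski--Solitar implies the Bass--Serre tree of the associated graph of finite groups is a model for $\efin$, which is precisely what you verify (via Serre's fixed-point theorem and the fact that fixed-point sets in trees are subtrees), together with the standard observation that $\gdf G=0$ if and only if $G$ is finite.
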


The next theorem, due to L\"uck \cite{luck-type}, gives a relation between the geometric dimension of a group and that of finite-index subgroups: 
  
\begin{thm}\cite[Thm. 2.4]{luck-type}\label{thm:finite-index-gd}
If  $H\subseteq G$ is a subgroup of finite index $n$, then $\gdf G \leq  \gdf H \cdot n\;$ 
\end{thm}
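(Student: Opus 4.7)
The plan is to construct an explicit model for $\underline{E}G$ from a given model for $\underline{E}H$ via coinduction. Let $X$ be a model for $\underline{E}H$ of minimal dimension $d=\gdf H$, and set
\[
Y \;=\; \operatorname{coind}_H^G X \;=\; \operatorname{Map}_H(G, X),
\]
the space of $H$-equivariant maps $\phi:G\to X$ where $H$ acts on $G$ by left translation. Endow $Y$ with the $G$-action $(g_0\cdot\phi)(g)=\phi(gg_0)$.

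First, I would fix right coset representatives $G=\bigsqcup_{i=1}^{n}Hg_i$ and use evaluation at the $g_i$ to identify $Y$ with the $n$-fold product $X^n$ as a topological space; hence $\dim Y = nd$.

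Second, for a subgroup $K\le G$ I would compute $Y^K$ by decomposing the right $K$-action on the finite set $H\backslash G$ into orbits. On each orbit $Hg_jK$, a $K$-invariant $H$-equivariant map is determined by a single value $\phi(g_j)\in X$, and this value must be fixed by the subgroup $g_jKg_j^{-1}\cap H$ of $H$. Therefore
\[
Y^K \;\cong\; \prod_{j}\, X^{\,g_jKg_j^{-1}\cap H}.
\]
When $K$ is finite each exponent is a finite subgroup of $H$, so every factor is contractible because $X=\underline{E}H$, and a finite product of contractible spaces is contractible. When $K$ is infinite, some factor $g_jKg_j^{-1}\cap H$ is infinite and the corresponding fixed set is empty, so $Y^K=\emptyset$; in particular point-stabilizers are finite and the $G$-action on $Y$ is proper.

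Finally I would promote the topological model to a $G$-CW model of the same dimension. Since the $G$-action on $Y\cong X^n$ permutes the product factors according to the coset action on $G/H$ while also acting through $H$ on each factor, the product CW-structure is not $G$-equivariant on the nose; a standard equivariant subdivision (or the equivariant simplicial approximation theorem applied to a triangulation of $X$) yields a genuine $G$-CW structure on $Y$ of topological dimension $nd$. Together with the fixed-point analysis this shows $Y$ is a model for $\underline{E}G$, giving $\gdf G \le n\cdot \gdf H$. The main obstacle I anticipate is precisely the passage from the topological product $X^n$ to an honest $G$-CW structure of the correct dimension; the orbit-decomposition computation of $Y^K$ is the conceptual heart but is straightforward once the coinduction is set up correctly.
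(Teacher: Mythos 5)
Your argument is correct, and it is essentially the proof of the cited result \cite[Thm.~2.4]{luck-type}: the paper states this theorem as a quotation and offers no proof of its own, so there is nothing in the text to diverge from. Your orbit-decomposition computation of $\bigl(\operatorname{Map}_H(G,X)\bigr)^K$ is right (note that for infinite $K$ every intersection $g_jKg_j^{-1}\cap H$ is infinite, since it has index at most $n$ in $g_jKg_j^{-1}$, so every factor is empty), and the only genuine technical point --- replacing the topological product $X^n$ with a $G$-CW-complex of the same dimension --- is exactly the one you flag, and is handled by standard equivariant triangulation/$G$-homotopy-equivalence arguments as in L\"uck's paper.
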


We will also need to be able to bound the proper geometric dimension of certain extensions of groups. In this direction, we will use the next result, which is a consequence of \cite[Thm. 5.16]{luck}:

\begin{thm}\label{thm:seq-efin}
Let $1 \to H \to G \to K\to 1$ be an exact sequence of groups.  Suppose that $H$ has the property that for any group $\tilde{H}$ which contains $H$ as subgroup of finite index, $\gdf \tilde{H}\leq n$. If $\gdf K\leq k$, then $\gdf G\leq n+k$.    
\end{thm}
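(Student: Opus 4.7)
The plan is to deduce the theorem as a direct translation of \cite[Thm.~5.16]{luck}, which (in the relevant form) states that for a short exact sequence $1 \to H \to G \xrightarrow{\pi} K \to 1$, one has
\[
\gdf G \;\leq\; \gdf K \;+\; \sup_{F} \gdf \pi^{-1}(F),
\]
where $F$ ranges over all finite subgroups of $K$. Our task is simply to check that the hypotheses of our theorem control the quantity $\sup_F \gdf \pi^{-1}(F)$, so that the resulting bound is $n + k$.

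The first step is to analyze, for a fixed finite subgroup $F \leq K$, the preimage $\widetilde H := \pi^{-1}(F)$. Restricting the given exact sequence to $\widetilde H$ produces a short exact sequence $1 \to H \to \widetilde H \to F \to 1$, from which $[\widetilde H : H] = |F| < \infty$. Thus $\widetilde H$ is precisely a group of the type appearing in the hypothesis of the theorem, namely one that contains $H$ as a subgroup of finite index. By assumption we therefore have $\gdf \widetilde H \leq n$, and since $F$ was arbitrary this bound is uniform: $\sup_F \gdf \pi^{-1}(F) \leq n$.

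The second step is to plug this into Lück's inequality, together with the hypothesis $\gdf K \leq k$, to conclude
\[
\gdf G \;\leq\; k \;+\; n,
\]
which is exactly the desired bound.

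The only genuine obstacle, as far as I can see, is that the statement of \cite[Thm.~5.16]{luck} is phrased in terms of the construction of a model for $\underline{E}G$ from models for $\underline{E}K$ and for $\underline{E}\pi^{-1}(F)$ (with dimensions adding), rather than directly as a numerical inequality on $\gdf$. One therefore has to make explicit that a model of dimension at most $n + k$ for $\underline{E}G$ exists by performing the push-out/fibred-product construction of that theorem, using an $n$-dimensional model for each $\underline{E}\widetilde H$ and a $k$-dimensional model for $\underline{E}K$. Once this translation is made, the conclusion is immediate.
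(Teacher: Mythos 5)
Your proposal is correct and is exactly the derivation the paper intends: the paper states this theorem as "a consequence of \cite[Thm. 5.16]{luck}" without further proof, and your observation that each preimage $\pi^{-1}(F)$ of a finite subgroup $F \leq K$ contains $H$ with finite index $|F|$, hence has $\gdf \pi^{-1}(F)\leq n$ by hypothesis, is precisely the translation needed to apply L\"uck's result and obtain $\gdf G \leq n+k$.
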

Finally, we will make use the following well-known result \cite[Prop. 2.6]{MF-BN} in order to prove Corollaries \ref{cor:positgen} and \ref{cor:braids}: 

\begin{lem}
	Suppose $\gdvc G \geq 3$. Let $1 \to F \to G \to H \to 1$ be a short exact sequence of groups, where $F$ is finite. Then $\gdvc G = \gdvc H$. 
	\label{lem:extend}
\end{lem}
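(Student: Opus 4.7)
The plan is to establish both inequalities $\gdvc G \le \gdvc H$ and $\gdvc H \le \gdvc G$ by constructing models directly, using that finiteness and normality of $F$ make virtual cyclicity compatible with the projection $p : G \twoheadrightarrow H$. Concretely, a subgroup $K \le G$ belongs to $\vc_G$ if and only if $p(K) \in \vc_H$; equivalently, $\overline{K} \le H$ lies in $\vc_H$ if and only if $p^{-1}(\overline{K}) \in \vc_G$. One direction of each equivalence uses closure of $\vc$ under quotients, and the other its closure under finite extensions.

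For the inequality $\gdvc G \le \gdvc H$, I would take a minimal-dimensional model $X$ for $\evce H$ and inflate the $H$-action to a $G$-action via $p$. Since $X^K = X^{p(K)}$ for every $K \le G$, the equivalence above immediately makes $X$ into a $G$-CW model for $\evce G$ of the same dimension, which gives $\gdvc G \le \gdvc H$.

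For the opposite inequality $\gdvc H \le \gdvc G$, I would start with a minimal-dimensional model $Y$ for $\evce G$, arranged so that the finite group $F$ acts cellularly and $Y^F$ is a genuine subcomplex. Since $F$ is finite, hence virtually cyclic, the fixed-point set $Y^F$ is nonempty and contractible, and normality of $F$ yields a residual action of $H = G/F$ on it. For every $\overline{K} \le H$ with preimage $K = p^{-1}(\overline{K})$, one has $(Y^F)^{\overline{K}} = Y^K$, which is contractible precisely when $\overline{K} \in \vc_H$ and empty otherwise; hence $Y^F$ is a model for $\evce H$ of dimension at most $\dim Y = \gdvc G$.

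The main obstacle I anticipate is justifying the role of the hypothesis $\gdvc G \ge 3$, which the naive geometric argument above does not visibly require. I expect this condition is present because the proof in \cite{MF-BN} is likely cohomological: one first shows $\cdvc G = \cdvc H$ via a Lyndon--Hochschild--Serre-type spectral sequence in Bredon cohomology (with $F$ sitting inside both $\fin_G$ and $\vc_G$), and then invokes the Eilenberg--Ganea-type identification of $\cdvc$ with $\gdvc$ available only once these invariants are at least $3$. That approach makes the dimension hypothesis essential and is the one I would follow in a careful write-up, while using the geometric picture above as a sanity check on each step.
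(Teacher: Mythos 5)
Your argument is correct, and you should know that the paper does not actually prove this lemma: it is quoted from Fluch--Nucinkis \cite{MF-BN}, so what you have written is a self-contained geometric alternative to a citation. Both halves check out. The group-theoretic input is that for $K\le G$ one has $K\in\vc_G$ if and only if $p(K)\in\vc_H$ (quotients of virtually cyclic groups are virtually cyclic, and a finite-by-(virtually cyclic) group is virtually cyclic since an extension of $\mathbb{Z}$ by a finite group splits), so $\vc_G$ is exactly the pullback family $p^{*}\vc_H$; inflating a model for $\evce H$ then gives a model for $\evce G$ of the same dimension. For the reverse inequality, no special ``arrangement'' of $Y$ is needed: for any $G$-CW-complex the fixed-point set of a subgroup is automatically a CW-subcomplex carrying an action of the Weyl group, which here is all of $H=G/F$ by normality, and $(Y^{F})^{\overline K}=Y^{p^{-1}(\overline K)}$ because $F\subseteq p^{-1}(\overline K)$. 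Your diagnosis of the hypothesis $\gdvc G\ge 3$ is also on target: your proof never uses it, and for a finite kernel the equality does hold without it; the hypothesis is an artifact of the cohomological route (Bredon cohomological dimension plus the Eilenberg--Ganea-type theorem, which only identifies $\cdvc$ with $\gdvc$ in dimension at least $3$). Since you prove a statement at least as strong as the one asserted, there is no gap; in a final write-up just record the two elementary equivalences about $\vc_G$ and $p$ explicitly rather than asserting them.
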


\section{Proof of Theorem \ref{thm:main}}

In this section we prove Theorem \ref{thm:main}, using Proposition \ref{prop:main} to $\Mod(S_0^n)$, with $n\le 6$. In order to do so, we first remark that the second and third named authors showed that $\Mod(S_g^n)$ has property (C) \cite{Daniel-Ale}, and that \cite[Theorem 6.1]{bonatti-paris} implies that $\PMod(S_0^n)$ has unique roots; we recall that $\PMod(S_0^n)$ has finite index in $\Mod(S_0^n)$. 
Next, we will use the following special case of the main result of \cite{AMP}, combined with Harer's calculation \cite{Harer} of the virtual cohomological dimension of the mapping class group: 

\begin{thm}
For every $n$, $\gdf \Mod(S_0^n) = n-3$.
\end{thm}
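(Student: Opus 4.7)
The plan is to sandwich the proper geometric dimension $\gdf \Mod(S_0^n)$ between two quantities that are already known to equal $n-3$: the virtual cohomological dimension on the bottom, and the value given by the main theorem of \cite{AMP} on the top.

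For the lower bound, I would first note that $\Mod(S_0^n)$ is virtually torsion-free (e.g.\ via a pure level-$3$ congruence subgroup), so it admits a well-defined virtual cohomological dimension $\vcd \Mod(S_0^n)$. The general inequality $\vcd G \le \gdf G$ holds for any group with a torsion-free subgroup of finite index, since any model for $\efine G$ restricted to such a subgroup is a finite-dimensional $K(\pi,1)$. Invoking Harer's calculation \cite{Harer} in the case $g=0$ gives $\vcd \Mod(S_0^n) = n-3$, so $\gdf \Mod(S_0^n) \ge n-3$.

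For the upper bound, I would simply cite the main result of \cite{AMP}, which asserts that for the mapping class group of any orientable surface of finite type the proper geometric dimension coincides with the virtual cohomological dimension. Specializing to $\Mod(S_0^n)$ and plugging in Harer's value gives $\gdf \Mod(S_0^n) \le \vcd \Mod(S_0^n) = n-3$.

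Combining the two inequalities yields the claimed equality. There is no real obstacle here, as both ingredients are invoked as black boxes; the only minor point to handle cleanly is justifying the existence of a torsion-free finite-index subgroup so that $\vcd$ is defined and acts as a lower bound for $\gdf$, but this is standard. The small cases $n \le 3$ (where $\Mod(S_0^n)$ is finite and the formula reads as $\le 0$) and $n=4$ (where the group is virtually free, so $\gdf = 1$ by Lemma \ref{lem:efin-vf}) are easily checked by hand and are consistent with the formula $n-3$ under the convention that $\gdf$ of a finite group is $0$.
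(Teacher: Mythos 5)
Your proposal is correct and matches the paper exactly: the paper does not prove this statement but simply quotes it as a special case of the main result of \cite{AMP} (that $\gdf \Mod(S) = \vcd \Mod(S)$) combined with Harer's computation $\vcd \Mod(S_0^n) = n-3$ from \cite{Harer}. Your separate lower-bound argument via $\vcd G \le \gdf G$ is harmless but redundant, since the cited result of \cite{AMP} is already an equality.
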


In the light of this result, inequality \eqref{efin-subgroup} implies that 
\[\gdf N_{\Mod(S_0^n)}(f) \leq n-3,\] for any $f\in \Mod(S_0^n)$. We will show in Lemma \ref{lem-max-0}  that $\PMod(S_0^n)$ has the property (Max).  
Therefore, by Remark \ref{rem-prop-max}, the proof of Theorem \ref{thm:main} boils down to proving that \[\gdf W_{\Mod(S_0^n)}(f)\leq  n-3,\] for every infinite-order element $f\in \PMod(S_0^n)$ such that $\langle f \rangle$ is maximal.
We will do so using a case-by-case analysis depending on the Nielsen-Thurston type of such a mapping class.
We have separated the proof in the cases $n=5$ and $n=6$, since the combinatorial possibilities are different in these two cases. 

\begin{rem}
As hinted in Remark \ref{rem:casuistic}, our methods will not carry over to an arbitrary number of punctures. In a nutshell, the reason lies in the difference between the mapping class group of a disconnected surface, and the product of mapping class groups of the components. While the latter is a finite-index subgroup of the former, the index grows with the topology of the surface. For $n\in \{5,6\}$, however, this index is amenable to our computations. 

On the other hand, we stress that essentially the same analysis as in the case $n=6$ will give a direct proof of Corollary \ref{cor:positgen}, as well as the analogous statement for $S_1^3$. 
\label{rem:casuistic2}
\end{rem}

\subsection{The case of the five-punctured sphere} As indicated above, we need to prove  \[\gdf W_{\Mod(S_0^5)}(f)\leq  3\] for every infinite-order element $f\in \PMod(S_0^5)$ such that $\langle f \rangle$ is maximal.  There are two cases to consider: 

\medskip

\noindent{\bf Case 1: $f$ is pseudo-Anosov.} Here, Lemma \ref{lem:pA-norm} implies that  $N_{\Mod(S_0^5)}(f)$ is virtually cyclic, in which case  $\gdf N_{\Mod(S_0^5)}(f) =1$ and   $\gdf W_{\Mod(S_0^5)}(f)=0$.  

\medskip

\noindent{\bf Case 2: $f$ is reducible.} Let $\sigma$ be its canonical reduction system. We distinguish the following further cases, depending on whether $\sigma$ has one or two elements.

\smallskip

{\em Subcase 2(a): $\sigma$ has exactly one element.} Write $\sigma= \{\alpha\}$, observing that $S_0^5 \setminus \alpha = S_0^3 \sqcup S_0^4$. Let $\rho$ be the cutting homomorphism \eqref{def-cut-hom} associated to $\sigma$. Suppose first that $\rho(f)$ is trivial, so that $f\in \langle T_\alpha \rangle$. By Lemma \ref{lem:twistnorm}, $N_{\Mod(S_{0,5})} (T_\alpha) = \Mod(S_{0,5})_\alpha$, and thus we have:  

\begin{equation}
1 \longrightarrow \langle T_\alpha \rangle \longrightarrow N_{\Mod(S_0^5)}(f) \longrightarrow \Mod(S_0^3,q_1) \times \Mod(S_0^4,q_2) \longrightarrow 1
\end{equation}
where the punctures $q_1,q_2$ are those  that appear  when the surface is cut along  $\alpha$ (see Figure \ref{fig-cutting-c}).
\begin{figure}
\begin{center}
\includegraphics[scale=0.2]{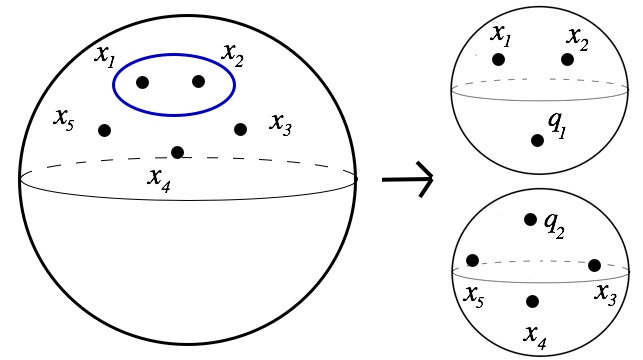}
\caption{Cutting along a curve.}
\label{fig-cutting-c}
\end{center}
\end{figure}
Therefore
\begin{equation}
 W_{\Mod(S_0^5)}(f)\simeq \Mod(S_0^3,q_1) \times \Mod(S_0^4,q_2)
 \end{equation}  
Since $\Mod(S_0^3)$ is finite and $\Mod(S_0^4)$ is virtually free, the combination of Lemma \ref{lem:efin-vf},  with equations \eqref{efin-product} and    \eqref{efin-subgroup} implies that  $\gdf W_{\Mod(S_0^5)}(f) =  1 $, as desired.

Suppose now that $\rho(f)$ is not trivial, so that the restriction of $f$  to $ \Mod(S_0^4,q_2)$ (using the notation above) is a pseudo-Anosov, which we denote $f_2$. In this case, we have: 

\begin{equation}
1 \longrightarrow \langle T_{\alpha}\rangle \to N_{\Mod(S_0^5)}(f) \longrightarrow \Mod(S_0^3,q_1) \times N_{\Mod(S_0^4,q_2)}(f_2)  \longrightarrow 1
\end{equation}

By Lemma \ref{lem:pA-norm}, $N_{\Mod(S_0^4,q_2)}(f_2)$ is virtually cyclic, hence taking quotients we obtain: 

\begin{equation}
1 \to \mathbb{Z} \to  W_{\Mod(S_0^5)}( f ) \to F \to 1,
\end{equation}
where $F$ is a finite group. In other words, $W_{\Mod(S_0^5)}( f )$ is virtually cyclic, and thus $\gdf W_{\Mod(S_0^5)}(f )= 1$ by Lemma \ref{lem:efin-vf}.

\medskip

{\em Subcase 2(b): $\sigma$ has two elements.} Write $\sigma= \{\alpha,\beta\}$, and $S \setminus \sigma = Y_1 \sqcup Y_2 \sqcup Y_3$. Note that $Y_j$ is homeomorphic to $S_0^3$, for $j=1,2,3$. 

Since $f$ is pure, it follows that $f\in \langle T_\alpha, T_\beta\rangle$. Therefore, the normalizer of $f$ in $\Mod(S_0^5)$ coincides with $\Mod(S_0^5)_\sigma$, by Lemma \ref{lem:twistnorm}. The cutting homomorphism \eqref{def-cut-hom} reads 
\begin{equation}
1\to \longrightarrow \langle T_\alpha, T_\beta\rangle \longrightarrow \Mod(S_0^5)_\sigma \longrightarrow \Mod( Y_1 \sqcup Y_2 \sqcup Y_3,\sigma) \longrightarrow 1.
\end{equation} 
Since $\Mod( Y_1 \sqcup Y_2 \sqcup Y_3)$ is a finite group, we obtain \[1 \longrightarrow \mathbb{Z} \longrightarrow  W_{\Mod(S_0^5)}( f ) \longrightarrow F',\]
where $F'$ is a finite group. Therefore, $\gdf W_{\Mod(S_0^5)}(f )= 1$, again by Lemma \ref{lem:efin-vf}.
This finishes the proof of Theorem \ref{thm:main} in the case $n=5$. 

\subsection{The case of the six-punctured sphere} We now prove Theorem \ref{thm:main} in the case $n=6$. Again, it suffices to prove that \[\gdf W_{\Mod(S_0^5)}(f) \leq 4\] for every $f \in \PMod(S_0^6)$ of infinite order such that $\langle f \rangle$ is maximal.  
Let $f$ be such an element. As in the case $n=5$, if $f$ is pseudo-Anosov, then  $\gdf N_{\Mod(S_0^6)}(f) =1$ and  $\gdf W_{\Mod(S_0^6)}(f)=0$. Therefore, from now on we assume that $f$ is reducible. Write $\sigma$ for the canonical reduction system of $f$, noting that $1\le |\sigma| \le 3$. 

\bigskip

\noindent{\bf Case 1: $|\sigma|=1$.} We write $\sigma=\{\alpha_1\}$, and distinguish the following subcases: 

\medskip

{\em Subcase 1(i): $\alpha_1$ bounds a disk with exactly two punctures.} In this case, $S_0^6 \setminus \alpha_1 = S_0^3 \sqcup S_0^5$, and  the cutting homomorphism \eqref{def-cut-hom} associated to $\sigma$ reads: 

\begin{equation}\label{eq:six0} 1
\to \langle T_{\alpha_1} \rangle \to \Mod(S_0^6)_\sigma \to \Mod(S_0^3 \sqcup S_0^5, \sigma) \to 1
\end{equation} 
Since the two components of $S_0^6 \setminus \alpha_1$ are not homeomorphic (or by Lemma \ref{lem:purecut}) we deduce that $\rhos(\Mod(S_0^6)_{\sigma})= \Mod(S_0^3,q_1)\times \Mod(S_0^5,q_2)$, where $q_1$ and $q_2$ are the punctures that appear  when cutting $S_0^6$ along  $\alpha_1$. Restricting this sequence to $N_{\Mod(S_0^6)}(f)$, and observing that $\Mod(S_0^3,q_1) \cong \mathbb{Z}_2$, we obtain: 

\begin{equation}\label{eq:six1}
1\to \langle T_{\alpha_1} \rangle \to N_{\Mod(S_0^6)}(f) \to \mathbb{Z}_2 \times \Mod(S_0^5,q_2) \to 1 
\end{equation}

Suppose first that $f$ has no pseudo-Anosov components; in other words, the projection of $f$ under the cutting homomorphism is trivial. In this case, $f$ is central in $N_{\Mod(S_0^6)}(f)$, and from \eqref{eq:six1} we obtain \[W_{\Mod(S_0^6)}(f) \cong \mathbb{Z}_2 \times \Mod(S_0^5, q_2).\] Note that any model for $\efin(\Mod(S_0^5))$ is also a model for $\efin(\mathbb{Z}\times \Mod(S_0^5))$ also; thus $\gdf W_{\Mod(S_0^6)}(f) \le \gdf \Mod(S_0^5) \le 2$.

Thus, we may assume that the restriction of $f$ to the $S_0^5$-component of $S_0^6 - \alpha_1$ is pseudo-Anosov. In this case, Lemma \ref{lem:pA-norm} and  equation \eqref{eq:six0} yield:

\begin{equation}
1\to \z \to N_{\Mod(S_0^6)}(f) \to \z_2 \times V\to 1,  
\end{equation}
where $V\subseteq N_{\Mod(S_0^5)}(f_2)$ is infinite and virtually cyclic.  Thus, taking quotients:
\begin{equation}
1\to \z  \to  W_{\Mod(S_0^6)}(f) \to  \z_2 \times F\to 1,  
\end{equation}
and hence $\gdf W_{\Mod(S_0^6)}(f)\leq 1$, as desired. 

\medskip

{\em Subcase 1(ii): Each component of $S_0^6 \setminus \alpha_1$ contains three punctures.} In this case, $S_0^6 \setminus \alpha_1 = S_0^4 \sqcup S_0^4$, and thus  \[\Mod(S_0^4\sqcup S_0^4)\stackrel{\psi}{\simeq} (\Mod(S_0^4)\times \Mod(S_0^4))\rtimes \z_2,\] where $\z_2$ is generated by a mapping class that interchanges the two components of $S_0^6 \setminus \alpha_1$. Furthermore, the image of $\Mod(S_0^6)_\sigma$ under the cutting homomorphism \eqref{def-cut-hom} is equal to \[\rhos (\Mod(S_0^6)_{\sigma})\stackrel{\psi\rhos}\simeq (\Mod(S_0^4,q_1)\times \Mod(S_{0}^4,q_2))\rtimes \z_2\] where $q_1$ and $q_2$ are again the new punctures of $S_0^6 \setminus \alpha_1$.  
Let $\Mod(S_0^6)^*_{\sigma}\subseteq \Mod(S_0^6)_{\sigma}$ be the subgroup whose elements do not permute the components of $S_0^6 \setminus \alpha_1$, and let $\rhos^*:=\rhos|_{\Mod(S_0^6)^*_{\sigma}}$. We have the following diagram:

 \begin{equation}\label{eq-1-stab}
 \xymatrix{
 &&& 1 \ar[d]&
 \\
 1\ar[r]& \langle T_{\alpha_1} \rangle \ar[d]^{Id} \ar[r]& \Mod(S_0^6)_{\sigma}^* \ar[d]^{inclusion}\ar[r]^(0.3){\psi\rhos^*}& \Mod(S_0^4,q_1)\times \Mod(S_0^4,q_2)\ar[r] \ar[d]& 1 
 \\
 1\ar[r]& \langle T_{\alpha_1} \rangle  \ar[r]& \Mod(S_0^6)_{\sigma} \ar[r]^(0.3){\psi\rhos}& (\Mod(S_0^4,q_1)\times \Mod(S_0^4,q_2))\rtimes \z_2 \ar[d]\ar[r]& 1 
 \\
 &&& \z_2\ar[d]&
 \\&&& 1 &
 }
 \end{equation}

With the above diagram in mind,  we distinguish the following two cases, depending on the image of $f$ under the cutting homomorphism associated to $\sigma$: 

\medskip

\noindent (a) Suppose first that $f$ has no pseudo-Anosov components. In this case, $N_{\Mod(S_0^6)}(f) = \Mod(S_0^6)_\sigma$. Since $\Mod(S_0^4)$ is virtually free, equation \eqref{efin-product} and Lemma \ref{lem:efin-vf} imply that \[\gdf\left (\Mod(S_0^4,q_1)\times \Mod(S_0^4,q_2)\right )\le 2,\] which in turn yields \[\gdf \left (\Mod(S_0^4,q_1)\times \Mod(S_0^4,q_2))\rtimes \z_2\right )\le 4,\] by Theorem  \ref{thm:finite-index-gd}. Finally, using Theorems \ref{thm:seq-efin} and Lemma \ref{lem:efin-vf}, we obtain $\gdf N_{\Mod(S_0^6)}(f)\leq 5$ and $ \gdf W_{\Mod(S_0^6)}(f)\leq 4$, as desired. 

\smallskip

\noindent (b) Now suppose that $f$ has at least one pseudo-Anosov component; equivalently, assume that$\psi\rhos(f)=(f_1,f_2,Id_{\z_2})$ is not trivial. Again, there are two cases two consider. 

Suppose first that there exists $(g_1,g_2,\gamma)$  in the image $\psi\rhos(N_{\Mod(S_0^6)}(f))$ with $\gamma\neq Id_{\z_2}$. In particular, $f_1$ is conjugate to $f_2^{\pm 1}$, and hence both $f_1$ and $f_2$ are pseudo-Anosov. Let \[N_{\Mod(S_0^6)}(f)^*=N_{\Mod(S_0^6)}(f) \cap \Mod(S_0^6)_{\sigma}^*.\] By restricting the diagram \eqref{eq-1-stab} we have
\begin{align}\label{n-one-four}
 \xymatrix{
 &&
 & 1 \ar[d]&
 \\
 1\ar[r]
 & \langle T_{\alpha_1} \rangle \ar[d]^{Id} \ar[r]
 & N_{\Mod(S_0^6)}(f)^* \ar[d]^{inclusion}\ar[r]^(0.5){\psi\rhos^*}
 & V_1\times V_2 \ar[r] \ar[d]
 & 1 
 \\
 1\ar[r]
 & \langle T_{\alpha_1} \rangle  \ar[r]
 & N_{\Mod(S_0^6)}(f) \ar[r]^(0.4){\psi\rhos}
 & (V_1\times V_2 )\rtimes \z_2 \ar[d]\ar[r]
 & 1 
 \\
 &&
 & \z_2\ar[d]&
 \\&&
 & 1 &
 }
 \end{align}
  where $V_1\times V_2\subseteq N_{\Mod(S_0^4,q_1)}(f_1)\times N_{\Mod(S_0^4,q_2)}(f_2)$, which is a product of virtually cyclic subgroups by Lemma \ref{lem:pA-norm}. 
Taking quotients in \eqref{n-one-four} we obtain: 
\begin{align}\label{w-one-four}
 \xymatrix{
1\ar[r]
 & \z   \ar[r]
 & W_{\Mod(S_0^6)}(f) \ar[r]
 & V_3,
  \ar[r]
 & 1 
}
 \end{align}   where $V_3$ is a virtually cyclic subgroup.
In particular, this implies that
 \begin{align}
 \gdf W_{\Mod(S_0^6)}(f)\leq 2,
 \end{align}
using Theorem \ref{thm:seq-efin}.  This finishes the proof of the case in consideration.

\smallskip

Next, suppose that for any element $g\in N_{\Mod(S_0^6)}(f)$, $\psi \rhos(g)=(g_1,g_2,Id_{\z_2})$, and thus $N_{\Mod(S_0^6)}(f)=N_{\Mod(S_0^6)}(f)^*$. Hence   $N_{\Mod(S_0^6)}(f)$ and $W_{\Mod(S_0^6)}(f)$ fit into the  short exact sequence, 
\begin{align}\label{eq-1}
 \xymatrix{
 1\ar[r]
 & \z  \ar[r]
 & N_{\Mod(S_0^6)}(f) \ar[r]^(0.5){\psi\rhos|}
 & V_1\times \Mod(S_0^4,q_2) \ar[r] 
 & 1 
 \\
 1\ar[r]
 & \z  \ar[r]
 & W_{\Mod(S_0^6)}(f) \ar[r]
 & F \times \Mod(S_0^4,q_2) \ar[r] 
 & 1, }
 \end{align} 
 in the case when $f_1$ is pseudo-Anosov and $f_2$ is the identity, or into
 \begin{align}\label{eq-2}
 \xymatrix{
 1\ar[r]
 & \z 
  \ar[r]
 & N_{\Mod(S_0^6)}(f) \ar[r]^(0.5){\psi\rhos}
 & V_1\times V_2 \ar[r] 
 & 1 
 \\ 
 1\ar[r]
 & \z 
  \ar[r]
 & W_{\Mod(S_0^6)}(f) \ar[r]
 & V_3 \ar[r] 
 & 1
 }
\end{align} when  or both $f_1$ and $f_2$ are pseudo-Anosov; here,
 $V_1$, $V_2$ and $V_3$ are virtually cyclic subgroups and $F$ is finite.
Proceeding as above, in both cases  (\ref{eq-1}) and (\ref{eq-2}) we conclude that $\gdf W_{\Mod(S_0^6)}(f)\leq 2$. 
This finishes the discussion of Case 1.

\bigskip

\noindent{\bf{Case 2. $|\sigma| =2$.}} Write $\sigma=\{\alpha_1,\alpha_2\}$. Again, there are some cases to consider, depending on the topological type of $\alpha_1$ and $\alpha_2$. 

\smallskip

{\em Subcase 2(i):   $\alpha_i$ bounds a disc with exactly two punctures, for $i=1,2$.}
Observe that $S_0^6 \setminus (\alpha_1 \cup \alpha_2) = S_0^4 \sqcup S_0^3 \sqcup S_0^3$. The cutting homomorphism \eqref{def-cut-hom}  yields the exact sequence $$1\to  \langle T_{\alpha_1},T_{\alpha_2}\rangle \to \Mod(S_0^6)_{\sigma}\to \Mod(S_0^4\sqcup S_0^3\sqcup S_0^3,\sigma)\to 1,$$ noting that  $\langle T_{\alpha_1},T_{\alpha_2}\rangle \simeq \z^2$. Observe that  
\begin{align}
\Mod(S_0^4\sqcup S_0^3\sqcup S_0^3,\sigma) & \stackrel{\psi}{\simeq} \Mod(S_0^4,q_1, q_2)\times (\Mod(S_0^3,q_3)\times \Mod(S_0^3,q_4))\rtimes \z_2 \\ & \stackrel{\phi} \simeq \Mod(S_0^4,q_1,q_2)\times (\z_2\times \z_2)\rtimes \z_2
\end{align}
Again, there are different cases depending on the image of $f$ under the cutting homomorphism. In this direction, suppose first that 
$f= T_{\alpha_1}^{k_1}T_{\alpha_2}^{k_2}$ with $\gcd(k_1,k_2)=1$. In this case $N_{\Mod(S_0^6)}(f)=\Mod(S_0^6)_{\sigma}$, and we have:
\begin{equation}\label{eq-3}
 1\to \z^2  \to  N_{\Mod(S_0^6)}(f) \xrightarrow{\phi\psi\rhos}
  \Mod(S_0^4,q_1,q_2)\times (\z_2\times \z_2)\rtimes \z_2 \to
 1 
 \end{equation}
 \begin{equation}
 1\to
  \z  \to
 W_{\Mod(S_0^6)}(f) \to \Mod(S_0^4,q_1,q_2)\times (\z_2\times \z_2)\rtimes \z_2  \to 1
 \label{eq-3-bis}
 \end{equation}
From the exact sequences \eqref{eq-3} and \eqref{eq-3-bis}, plus (\ref{efin-product}) and Theorem \ref{thm:seq-efin},   we conclude that $\gdf W_{\Mod(S_0^6)}(f)\leq 2$, as desired. 

Suppose now that  $\psi\rhos(f)=(f_1,Id)\in \Mod(S_0^4,q_1,q_2)\times F$, with $f_1$ is pseudo-Anosov. Then 
\[ 1\to
  \z^2  \to
 N_{\Mod(S_0^6)}(f) \xrightarrow{\phi\psi\rhos}
 V\times F \to
 1 \]
 and 
 \[
 1\to
  \z^2  \to
  W_{\Mod(S_0^6)}(f) \xrightarrow{\psi\rhos}
  F\times F \to 
  1
 \]
where $F, F'$ are finite groups and $V$ is virtually cyclic. By Theorem \ref{thm:seq-efin} we conclude  
\[
\gdf N_{\Mod(S_0^6)}(f)\leq 3 \;\;\text{and}\;\;
\gdf N_{\Mod(S_0^6)}(f)\leq 2,
\]
as desired.

{\em Subcase 2(ii):   $\alpha_1$ bounds a disc with exactly two punctures, and $\alpha_2$ bounds a disc with three punctures. } In this case,  the cutting homomorphism \eqref{def-cut-hom} again gives:
\[1\to  \langle T_{\alpha_1},T_{\alpha_2}\rangle \to \Mod(S_0^6)_{\sigma}\to \Mod(S_0^4\sqcup S_0^3\sqcup S_0^3,\sigma)\to 1.\] However, in this case we have: 
\begin{align*}
\psi \rhos(\Mod(S_0^6)_{\sigma})&= \Mod(S_0^3,q_1)\times \Mod(S_0^3,q_2,q_3)\times \Mod(S_0^4,q_4)\\
&\stackrel{\nu}{\simeq} \z_2 \times \Mod(S_0^4,q_4).  \end{align*}
Again, we distinguish two cases depending on the image of $f$ under the cutting homomorphism. First, assume that $f=T_{\alpha_1}^{k_1}T_{\alpha_2}^{k_2}$ with $\gcd(k_1,k_2)=1$. Then  $N_{\Mod(S_0^6)}(f)=\Mod(S_0^6)_\sigma$, and therefore 
we have the sequences 
\[
 1\to
  \z^2  \to
 N_{\Mod(S_0^6)}(f) \xrightarrow{\nu\rhos}
   \z_2\times \Mod(S_0^4,q_4) \to
 1
\]
 and
\[
 1\to
\to \z  \to
  W_{\Mod(S_0^6)}(f) \xrightarrow{\nu\rhos}
   \z_2\times \Mod(S_0^4,q_4) \to
 1.
\]
From these sequences, we conclude that
\begin{align*}
\gdf N_{\Mod(S_0^6)}(f)\leq  3 \;\; \text{and}\;\; \gdf W_{\Mod(S_0^6)}(f) \leq 2,
\end{align*} 
as desired. 
\smallskip

Suppose now that $\nu\rhos f =(Id_{\z_2}, f_1)$, where $f_1$ is pseudo-Anosov. From Lemma \ref{lem:pA-norm}  we have  the sequences
\[
 1\to 
 \z^2 \to 
  N_{\Mod(S_0^6)}(f) \xrightarrow{\nu\rhos}
   V\to
 1\]
and
\[
 1\to 
 \z^2  \to
  W_{\Mod(S_0^6)}(f) \xrightarrow{\nu\rhos}
  F\to
 1,
\]
where $V'$ is a virtually cyclic subgroup and $F$ is finite. Therefore $\gdf W_{\Mod(S_0^6)}(f) \leq 2$ again. This finishes the discussion of Case 2. 

\bigskip

\noindent{\bf{Case 3. $|\sigma| =3$.}} Write $\sigma=\{\alpha_1,\alpha_2,\alpha_3\}$, observing that $S_0^6 \setminus (\alpha_1\cup \alpha_2\cup \alpha_3)$ is the disjoint union of four copies of  $S_0^3$. Thus the cutting homomorphism \eqref{def-cut-hom} gives
\[1\to  \langle T_{\alpha_1},T_{\alpha_2}, T_{\alpha_3}\rangle \to \Mod(S_0^6)_{\sigma}\xrightarrow{\rho_\sigma} \Mod(S_0^3\sqcup S_0^3\sqcup S_0^3 \sqcup S_0^3,\sigma)\to 1, \] observing that  $\langle T_{\alpha_1},T_{\alpha_2}, T_{\alpha_3} \simeq \z^3$. 
Note that  $\Mod(S_0^3 \sqcup S_0^3 \sqcup S_0^3 \sqcup S_0^3)$ is a finite subgroup,  and that $f$ is in the kernel of $\rhos$; moreover, $f=T_{\alpha_1}^{k_1}T_{\alpha_2}^{k_2}T_{\alpha}^{k_3}$  with $\gcd(k_1,k_2,k_3)=1$. Therefore we have the sequences
\[
1\to \z^3 \to 
 N_{\Mod(S_0^6)}(f) \to
  F \to 1
\] and 
\[ 1\to  \z^2 \to  W_{\Mod(S_0^6)}(f) \to  F \to 1.\]
 In particular, $\gdf W_{\Mod(S_0^6)}(f) \leq 2$, as desired. 
This finishes the discussion of Case 3, and also the proof of Theorem \ref{thm:main}. \qed

\subsection{Proof of Corollaries \ref{cor:positgen} and \ref{cor:braids}} We now explain how to prove Corollaries  \ref{cor:positgen} and \ref{cor:braids}. First, the latter follows immediately from the combination of Theorem \ref{thm:main}, equation \eqref{suc-ex-sphere}, and Lemma \ref{lem:extend}. 

Now, Corollary \ref{cor:positgen} follows along equal lines, recalling that there are short exact sequences 
\[1\to \z_2 \to \Mod(S_1^2) \to \Mod(S_0^5) \to 1\]
and 
\[1\to \z_2 \to \Mod(S_2^0) \to \Mod(S_0^6) \to 1;\] in both cases, the $\z_2$ is generated by a {\em hyperelliptic involution}, 
see \cite{margalit}.

\section{A general bound} 
\label{sec:general}

In this section we prove Theorem \ref{propo:bound}. Before doing so, we remark that Degrijse-Petrosyan \cite{DP} have recently given the following bound for the virtually cyclic dimension of $\Mod(S_g^n)$: 

\begin{thm}[\cite{DP}]
Let $g,n\ge 0$ with $3g-3+n \ge 1$. Then 
$$
\gdvc \Mod(S_g^n) \le 9g+ 3n-8.
$$
\label{prop:DP}
\end{thm}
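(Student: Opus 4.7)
The plan is to apply Theorem \ref{thm-luck-weierm} to $G = \Mod(S_g^n)$ with target dimension $d = 9g + 3n - 8$, and verify its three conditions. Hypothesis (1) is immediate from the result of Aramayona--Mart\'inez-P\'erez together with Harer's formula, which give $\gdf \Mod(S_g^n) = \vcd \Mod(S_g^n)$ equal to an explicit linear function of $g$ and $n$ (for instance $4g + n - 4$ in the typical range), and this quantity is easily seen to be at most $9g + 3n - 8$ throughout $3g - 3 + n \ge 1$.

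For each equivalence class $[H]$ of an infinite virtually-cyclic subgroup, one picks a pure infinite-order representative $f$ with canonical reduction system $\sigma$. If $\sigma = \emptyset$, then $f$ is pseudo-Anosov, and Lemma \ref{lem:pA-norm} makes $N_G[H]$ virtually cyclic, verifying (2) and (3) trivially. Otherwise, after passing to a finite-index subgroup $N$ sitting inside the stabilizer $\Mod(S_g^n)_\sigma$, the cutting homomorphism \eqref{def-cut-hom} yields a short exact sequence
\begin{equation*}
1 \longrightarrow T_\sigma \longrightarrow N \longrightarrow Q \longrightarrow 1,
\end{equation*}
where $T_\sigma \cong \z^{|\sigma|}$ and $Q$ is contained in a finite extension of $\prod_i \Mod(Y_i, q_i)$ over the components of $S_g^n \setminus \sigma$. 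Factors of $Q$ coming from pseudo-Anosov components of $f$ are virtually cyclic (Lemma \ref{lem:pA-norm}), while those coming from identity components are mapping class groups of strictly simpler surfaces to which the induction hypothesis applies.

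Iteratively applying Theorems \ref{thm:seq-efin} and \ref{thm:finite-index-gd} to this sequence, together with the induction on the complexity $3g - 3 + n$, produces an inductive estimate of the form
\begin{equation*}
\gdf N_G[H] \le |\sigma| + \sum_i \gdf \Mod(Y_i, q_i) + (\text{permutation correction}),
\end{equation*}
and one maximizes the right-hand side over all possible canonical reduction systems $\sigma$, using that $\sigma$ reduces the total complexity $3g-3+n$ by exactly $|\sigma|$ when cutting. A direct check shows the maximum is at most $d-1 = 9g+3n-9$, which verifies (2); condition (3) follows by quotienting by the $\z$-direction generated by $f$, which simply drops one rank from $T_\sigma$ while preserving the rest of the estimate. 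The main obstacle is the combinatorial optimization: one must carefully bound the finite-index overhead produced by Theorem \ref{thm:finite-index-gd} when several cut components $Y_i$ are homeomorphic and can be permuted, and verify that, because only one component can support a pseudo-Anosov direction of $f$ while the remaining components are controlled by induction, this overhead stays linear in $g$ and $n$ rather than growing factorially.
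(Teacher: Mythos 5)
Your proposal does not follow the paper's route, and as written it has a genuine gap at exactly the point the paper itself identifies as the obstruction. The critical step is the ``permutation correction'': when several of the complementary components $Y_i$ are homeomorphic, the image of the cutting homomorphism is only a finite extension of $\prod_i \Mod(Y_i,q_i)$, and the sole tool in the paper for passing $\gdf$ up a finite extension is Theorem \ref{thm:finite-index-gd}, which is \emph{multiplicative} in the index: $\gdf G \le \gdf H \cdot n$. Since the permutation group of the homeomorphic components has order growing factorially in the number of pieces, the resulting estimate is of the form $(\text{linear}) \cdot (\text{factorial})$, not linear in $g$ and $n$. Your closing sentence asserts that this overhead ``stays linear'' because ``only one component can support a pseudo-Anosov direction of $f$''; that claim is false (a reducible pure class can be pseudo-Anosov on several components -- see Case 1(ii)(b) of the paper's $n=6$ analysis), and in any case the problematic components are the ones on which $f$ is the identity, where the full $\Mod(Y_i,q_i)$ survives into the quotient and its copies can be permuted. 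The ``direct check'' that the maximum is at most $9g+3n-9$ is never performed, and there is no mechanism in your argument to perform it. This is precisely why the paper's Remark \ref{rem:casuistic2} says the case-by-case method does not extend beyond $n=6$; when the paper does want a general linear bound in genus zero (Theorem \ref{propo:bound}), it replaces Theorem \ref{thm:finite-index-gd} by Mart\'inez-P\'erez's inequality $\gdf G \le \vcd G + l(G)$ together with the Cameron--Solomon--Turull bound $l(\Sigma_n)=[\tfrac{3n-1}{2}]-b_n$, none of which appears in your proposal. (A minor additional confusion: the ``induction on complexity'' is not needed for condition (2) -- what enters is $\gdf \Mod(Y_i,q_i)$, which is known outright from Aramayona--Mart\'inez-P\'erez and Harer -- and Theorem \ref{thm-luck-weierm} asks for bounds on $\gdf N_G[H]$ and $\gd_{\g[H]}N_G[H]$, not on $\gdvc$ of smaller surfaces.)

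For comparison, the paper's proof of Theorem \ref{prop:DP} is entirely different and much shorter: $\Mod(S_g^n)$ acts by semisimple isometries on the metric completion of Teichm\"uller space with the Weil--Petersson metric, which is a complete separable ${\rm CAT}(0)$ space; the underlying space has dimension $6g+2n-6$ and point stabilizers are virtually abelian of rank at most $3g+n-3$, so \cite[Corollary 3(iii)]{DP} gives $\gdvc \Mod(S_g^n) \le (6g+2n-6)+(3g+n-3)+1 = 9g+3n-8$. If you want to pursue a L\"uck--Weiermann proof of a linear bound, you should model it on the paper's Section 5 rather than on the $n\le 6$ case analysis.
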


The above  result is stated in \cite{DP} for closed surfaces only; however the argument remains valid in full generality. For completeness we include a sketch here, which uses known facts about the geometry of the Weil-Petersson metric on Teichm\"uller space. We refer the reader to \cite{Wolpert} for a thorough discussion on these and many other topics. 

\begin{proof}[Proof of Theorem \ref{prop:DP}]
Denote by $T_{g,n}$ the Teichm\"uller space of $S_g^n$, which is homeomorphic to $\mathbb{R}^{6g+2n-6}$.
Endow $T_{g,n}$ with its Weil-Petersson metric, on which $\Mod(S_g^n)$ acts by semisimple isometries. 
 The metric completion $\bar{T}_{g,n}$ of $T_{g,n}$ is a complete separable ${\rm CAT}(0)$ space, and the action of $\Mod(S_g^n)$ on $T_{g,n}$  extends to a semisimple isometric action on $\bar{T}_{g,n}$. Moreover, the stabiliser of a point is a virtually abelian group of rank $\le 3g+n-3$. At this point, \cite[Corollary 3(iii)]{DP} implies that \[\gdvc \Mod(S_g^n) = (6g+2n-6) + (3g+n-3) + 1 = 9g+3n - 8,\] as desired
\end{proof}

We now proceed to prove Theorem \ref{propo:bound}. Again, the main tool will be Proposition \ref{prop:main}, this time combined with a result of Mart\'inez-P\'erez \cite{Conchita}.  Before stating the latter, we need the following definition. 
Let $G$ be a group, and $F\in \fin_G$ a finite subgroup. The {\em length} $l(F)$ of $F$ is defined as the largest natural number $k$ for which there is a chain $1=F_0<F_1 < \cdots < F_k =F$.
The {\em length} of $G$ is 
$$l(G)= \sup \{l(F) \mid F\in \fin_G \}.$$
\begin{thm}\cite[Thm. 3.10, Lem. 3.9]{Conchita} \label{thm-CM}
 Suposse that $ 3\leq \gdf G < \infty $. If $l(G)$ is finite, then    
\begin{align*}
\gdf G \leq \vcd G+l(G),
\end{align*}  
where $\vcd(\cdot)$ denotes virtual cohomological dimension. 
\end{thm}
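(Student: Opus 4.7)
The strategy is to reduce the theorem to a Bredon-cohomological bound and then invoke an Eilenberg--Ganea-type realization. Let $\underline{\mathrm{cd}}\,G$ denote the Bredon cohomological dimension of $G$ with respect to $\fin_G$. The hypothesis $\gdf G \geq 3$ places us squarely in the range where the theorem of L\"uck--Meintrup applies, giving $\gdf G = \underline{\mathrm{cd}}\,G$ (in general $\underline{\mathrm{cd}}\,G \leq \gdf G$, with equality once the right-hand side is at least $3$). Consequently, the statement is equivalent to the purely algebraic inequality
\[
\underline{\mathrm{cd}}\,G \leq \vcd G + l(G),
\]
and it is this form that I would attack.

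My plan for the cohomological inequality is induction on $l(G)$, which is finite by hypothesis. The base case $l(G)=0$ forces $G$ to be torsion-free, in which case $\underline{\mathrm{cd}}\,G = \mathrm{cd}\,G = \vcd G$ and the inequality is an equality. For the inductive step, I would choose a torsion-free subgroup $H \leq G$ of finite index realizing $\vcd G = \mathrm{cd}\,H$, and let $\{F_\lambda\}$ be a set of representatives of conjugacy classes of \emph{minimal} nontrivial finite subgroups of $G$. Then one can build a Bredon model for $\efine G$ from a model for $EH$ by equivariantly coning off, for each $\lambda$, the action of the Weyl group $W_G(F_\lambda) := N_G(F_\lambda)/F_\lambda$ on the appropriate singular locus; this is a standard push-out construction in the proper-homotopy category. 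The resulting long exact sequence (or Mayer--Vietoris spectral sequence) in Bredon cohomology relates $\underline{\mathrm{cd}}\,G$ to $\mathrm{cd}\,H = \vcd G$ and to $\underline{\mathrm{cd}}\,W_G(F_\lambda)$ for each $\lambda$, with a degree shift of at most one per attached stratum.

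Because $F_\lambda$ is minimal nontrivial, any chain of finite subgroups of $W_G(F_\lambda)$ lifts to a strictly longer chain in $G$, so $l(W_G(F_\lambda)) \leq l(G) - 1$. Moreover, the image of $H \cap N_G(F_\lambda)$ in $W_G(F_\lambda)$ is a torsion-free subgroup of finite index, so $\vcd W_G(F_\lambda) \leq \vcd G$. The inductive hypothesis then supplies $\underline{\mathrm{cd}}\,W_G(F_\lambda) \leq \vcd G + l(G) - 1$, and combining these contributions through the push-out yields $\underline{\mathrm{cd}}\,G \leq \vcd G + l(G)$, as required.

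The main obstacle will be the bookkeeping in the push-out/spectral sequence: one must verify that each iteration of attaching singular strata contributes at most one to the cohomological dimension, that the Weyl groups involved indeed satisfy the inductive hypothesis, and that assembling the bounds over (possibly infinitely many) conjugacy classes $\{F_\lambda\}$ does not inflate the dimension. Once this cohomological bound is secured, L\"uck--Meintrup immediately converts it into the desired inequality $\gdf G \leq \vcd G + l(G)$.
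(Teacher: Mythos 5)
Note first that the paper does not actually prove this statement: it is imported verbatim from Mart\'inez-P\'erez \cite[Thm.~3.10, Lem.~3.9]{Conchita}, so your argument has to stand on its own. Its overall shape is reasonable and close in spirit to the source: reduce to the Bredon cohomological dimension $\underline{\mathrm{cd}}\,G$, then induct on $l(G)$ using Weyl groups of finite subgroups. The two auxiliary facts you isolate are correct for the reasons you give: for a minimal nontrivial finite $F$, any finite subgroup of $W_G(F)=N_G(F)/F$ pulls back to a finite subgroup of $G$ containing $F$, so $l(W_G(F))\le l(G)-1$; and the image of $H\cap N_G(F)$ in $W_G(F)$ is a torsion-free finite-index subgroup, so $\vcd W_G(F)\le \vcd G$.

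The genuine gap is the step you defer to ``bookkeeping'', which is in fact the entire content of the theorem: the claim that a model for $\efine G$ is obtained from $EH$ by a single equivariant push-out over (conjugacy classes of) minimal nontrivial finite subgroups, each stratum costing at most one dimension. The L\"uck--Weiermann push-out machinery you are implicitly invoking requires an equivalence relation on the adjoined subgroups under which each class has a well-defined normalizer and the classes index disjoint singular pieces; no such structure exists for the pair $(\{1\},\fin_G)$ in general. A point of $\underline{E}G$ may be fixed by several non-conjugate minimal finite subgroups, the fixed sets $X^{F_\lambda}$ overlap, and the transitive closure of containment of nontrivial finite subgroups typically identifies all of them, so the double mapping cylinder you describe does not yield $\efine G$ and the Mayer--Vietoris sequence you want does not exist in that form. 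One must instead filter the singular set by the length of the isotropy groups and control each layer of that filtration --- this is precisely what Mart\'inez-P\'erez does, algebraically, in Bredon cohomology --- and your induction on $l(G)$ via one push-out skips exactly this. A secondary inaccuracy: L\"uck--Meintrup gives $\gdf G=\underline{\mathrm{cd}}\,G$ when $\underline{\mathrm{cd}}\,G\ge 3$, not when $\gdf G\ge 3$; in the residual case $\gdf G=3>\underline{\mathrm{cd}}\,G$ you still need to check $\vcd G+l(G)\ge 3$ (true, since such a $G$ cannot be virtually free and must have torsion, but it requires a sentence).
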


We begin with the following Lemma:

\begin{lem}\label{lem-gdfwp}
Suppose $n\geq 4$. Let $f\in \PMod(S_0^n)$ and suppose that $\langle f\rangle $ is maximal in $\C_{\PMod(S_0^n)}$. Then 
$$\gdf W_{\PMod(S_0^n)}(f)\leq n-4.$$ 
\end{lem}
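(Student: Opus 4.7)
The plan is to split on the Nielsen--Thurston type of $f$ and, in the reducible case, use the cutting homomorphism \eqref{def-cut-hom} to reduce $W_{\PMod(S_0^n)}(f)$ to a group built from mapping class groups of smaller punctured spheres and a multitwist subgroup. Throughout, I would use the uniform bound $\gdf \PMod(S_0^{n_i})\leq n_i-3$, together with \eqref{efin-product} and Theorem \ref{thm:seq-efin}, to extract the inequality $\gdf W\leq n-4$.

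\textbf{Setup and the pseudo-Anosov case.} If $f$ is pseudo-Anosov, Lemma \ref{lem:pA-norm} gives that $N_{\PMod(S_0^n)}(f)$ is virtually cyclic, so $W_{\PMod(S_0^n)}(f)$ is finite and $\gdf W_{\PMod(S_0^n)}(f)=0\leq n-4$. Otherwise, let $\sigma=\{\alpha_1,\ldots,\alpha_r\}$ be the canonical reduction system of $f$ and write $S_0^n\setminus\sigma = Y_1\sqcup\cdots\sqcup Y_k$ with $Y_i=S_0^{n_i}$. Planarity forces $k=r+1$ and $\sum_i(n_i-3)=n-r-3$. The cutting sequence \eqref{def-cut-hom}, together with Lemma \ref{lem:purecut}, sends $f$ to a tuple $(g_1,\ldots,g_k)$ in which each $g_i$ is either trivial or pseudo-Anosov. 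Let $S=\{i : g_i \text{ is pseudo-Anosov}\}$ and $s=|S|$. Since $\sigma$ is the CRS, $N_{\PMod(S_0^n)}(\langle f\rangle)\subseteq (\PMod(S_0^n))_\sigma$, and its image under cutting lies in the normaliser of $\langle(g_i)\rangle$ in $\prod_i\PMod(Y_i)\rtimes\Sigma$, where $\Sigma$ is the (finite) group of admissible permutations of the $Y_i$; this normaliser equals, up to finite index, $\prod_{i\notin S}\PMod(Y_i)\times\prod_{i\in S}V_i$ with each $V_i=N_{\PMod(Y_i)}(\langle g_i\rangle)$ virtually cyclic by Lemma \ref{lem:pA-norm}.

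\textbf{Case A: $s=0$ (pure multitwist).} Then $\langle f\rangle\subset T_\sigma$, and maximality of $\langle f\rangle$ in $\C_{\PMod(S_0^n)}$ forces $T_\sigma/\langle f\rangle$ to be virtually $\z^{r-1}$. The Weyl group fits into
\begin{equation*}
1\to T_\sigma/\langle f\rangle \to W_{\PMod(S_0^n)}(f) \to Q \to 1,
\end{equation*}
with $Q$ a subgroup of a finite extension of $\prod_i\PMod(Y_i)$ by $\Sigma$. Using \eqref{efin-product} and Theorem \ref{thm:seq-efin} (applied to the $\Sigma$-extension, noting that any finite overgroup of $\prod_i\PMod(Y_i)$ still satisfies the bound $\sum_i(n_i-3)$ via the main result of \cite{AMP} applied to each component), we obtain $\gdf Q\leq n-r-3$. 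A further application of Theorem \ref{thm:seq-efin} (using that all finite overgroups of $\z^{r-1}$ are crystallographic in dimension $r-1$) gives $\gdf W\leq (r-1)+(n-r-3)=n-4$.

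\textbf{Case B: $s\geq 1$.} Now $\langle f\rangle\cap T_\sigma=\{1\}$, so $T_\sigma\cong\z^r$ injects into $W_{\PMod(S_0^n)}(f)$, yielding
\begin{equation*}
1\to \z^r \to W_{\PMod(S_0^n)}(f) \to Q' \to 1,
\end{equation*}
where $Q'\subseteq \prod_{i\notin S}\PMod(Y_i)\times\bigl(\prod_{i\in S}V_i/\langle (g_i)\rangle\bigr)$ up to finite extension by $\Sigma$. Since each $V_i$ is virtually $\z$ and $\langle(g_i)\rangle$ sits diagonally, the second factor is virtually $\z^{s-1}$; combining \eqref{efin-product} with Theorem \ref{thm:seq-efin} yields
\begin{equation*}
\gdf W_{\PMod(S_0^n)}(f) \leq r + \sum_{i\notin S}(n_i-3) + (s-1).
\end{equation*}
The existence of a pseudo-Anosov on $Y_i=S_0^{n_i}$ requires $n_i\geq 4$, hence $\sum_{i\in S}(n_i-3)\geq s$, so $\sum_{i\notin S}(n_i-3)\leq (n-r-3)-s$ and we get $\gdf W\leq n-4$ once more.

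\textbf{Main obstacle.} The technical nuisance is the finite permutation group $\Sigma$ of components: a priori it gives only the multiplicative bound of Theorem \ref{thm:finite-index-gd}, which would be too weak. The key is to invoke Theorem \ref{thm:seq-efin} in the form ``finite-index overgroups preserve the bound,'' which applies to both crystallographic groups and to products of mapping class groups of spheres (via the Harer--Aramayona--Martínez-Pérez formula). The other, more conceptual point is that the sharp bound in Case B relies precisely on the pseudo-Anosov constraint $n_i\geq 4$ on the components of $S_0^n\setminus\sigma$; this is what turns the naive bound $n-4+4s-\sum_{i\in S}n_i$ into $n-4$.
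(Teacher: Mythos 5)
Your proposal follows essentially the same route as the paper: cut along the canonical reduction system, separate the pseudo-Anosov components from the identity components, bound the Weyl group via the extension with kernel the twist lattice using Theorem \ref{thm:seq-efin} and \eqref{efin-product}, and recover the $-4$ from the constraint $n_i\geq 4$ on pseudo-Anosov components. Your arithmetic in both cases matches the paper's ($\,r+(n-r-3)-s+(s-1)=n-4\,$ and $(r-1)+(n-r-3)=n-4$), so the core of the argument is sound.

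The one point where you diverge is the permutation group $\Sigma$ of the components $Y_i$, which you single out as the main obstacle and dispose of by asserting that ``any finite overgroup of $\prod_i\PMod(Y_i)$ still satisfies the bound $\sum_i(n_i-3)$ via \cite{AMP} applied to each component.'' As stated, that assertion is not justified: a finite-index overgroup of the product need not be a product, and controlling $\gdf$ of such overgroups is precisely the difficulty the paper isolates in Remark \ref{rem:casuistic2} as the obstruction to extending the methods of Theorem \ref{thm:main} to all $n$. Fortunately the issue is vacuous here: the normalizer is taken inside $\PMod(S_0^n)$, whose elements fix every original puncture, and on a planar surface the dual graph of $\sigma$ is a tree whose leaf components each carry at least two original punctures; an automorphism of a tree fixing all leaves is trivial, so every element of $N_{\PMod(S_0^n)}(f)$ fixes each $Y_i$, each $\alpha_j$, and each new puncture. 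Hence $\Sigma$ is trivial and the image of the normalizer under the cutting homomorphism already lies in $\prod_i\PMod(Y_i)$, which is exactly how the paper sets up the exact sequence. With that replacement your argument is complete.
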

\begin{proof}
Suppose that $f$ has canonical reduction system $\sigma=(\alpha_1,...,\alpha_k)$, the restriction  $f|_{Y_j}$ is pseudo-Anosov for $j\in{1,...,r}$ and $f|_{Y_i}$ is the identity for $i\in \{r+1,...,k+1\}$. Note that for $j\in \{1,...,r\}$,  $Y_j$ is a sphere with at least four punctures.   
Thus from the definition of the cutting homomorphism \eqref{def-cut-hom} and the comment after it,   we have
\begin{equation}
1\to  \langle{T_{\sigma}}\rangle \to  N_{\PMod(S_0^n)}(f)  \xrightarrow{\rhos}   U \times\Pi_{i=r+1}^{k+1} \PMod(Y_i)  \to  1 
\end{equation}
where $U$ is a finite-index subgroup of $\Pi_{i=1}^r N_{\PMod(Y_i)}(f|_{Y_i})\simeq \z^r$.  
  If $f\in \langle{T_{\sigma}}\rangle$, then $r=0$ and 
\begin{align}\xymatrix{
1\ar[r] & \langle{T_{\sigma}}\rangle/\langle f\rangle \ar[r]& W_{\PMod(S_0^n)}(f)  \ar[r]^{\widehat{\rhos}}  &  \Pi_{i=1}^{k+1} \PMod(Y_i)  \ar[r]    & 1 
 }
\end{align}
where $\langle{T_{\sigma}}\rangle/\langle f\rangle \simeq \z^{k-1}$. By Theorem \ref{thm:seq-efin}, equation \eqref{efin-product}, and \cite[Corollary 10.5]{HOP} we have 
\begin{align*} 
\gdf W_{\PMod(S_0^n)}(f) &\leq k-1 + \sum_{i=1}^{k+1} \gdf \PMod(Y_i) \\
&=k-1 +\sum_{i=1}^{k+1} \vcd \PMod(Y_i)\\
&= k-1 + n-k-3\\
 &= n-4.
\end{align*}
If,  on the other hand, $f\not\in \langle{T_{\sigma}}\rangle$, then $r\geq 1$.  Let $\bar{f}=(f|_{Y_1},...,f|_{Y_r})$, then  
\begin{equation} \label{seq-wp}
1\to   \langle{T_{\sigma}}\rangle \to  W_{\PMod(S_0^n)}(f) \xrightarrow{\rhos}  U/\langle \bar{f}\rangle \times\Pi_{i=r+1}^{k+1} \PMod(Y_i)  \to 1 
\end{equation}
Note that $U/\langle \bar{f}\rangle$ is virtually $\z^{r-1}$, and thus $\gdf (U/\langle \bar{f}\rangle)= r$. Again, applying Theorem \ref{thm:seq-efin},  equation \eqref{efin-product}, and \cite[Corollary 10.5]{HOP} to (\ref{seq-wp}) we have
\begin{align*}
\gdf W_{\PMod(S_0^n)}(f) &\leq k+1+ r-1 + \sum_{i=r+1}^{k+1} \vcd \PMod (Y_i) 
\\
& \leq k+r-1 + \sum_{i=1}^{k+1} \vcd \PMod (Y_i)-r
\\
&=n-4.
\end{align*}
\end{proof}

We are finally in a position for proving Theorem \ref{propo:bound}: 

\begin{proof}[Proof of Theorem \ref{propo:bound}]
We will  use Proposition \ref{prop:main}. Let  $\langle f \rangle $ maximal in $\C_{\PMod(S_0^n)}$, and 
note  that $\gdf N_{\Mod(S_0^n)}(f)\leq \gdf \Mod(S_0^n)= n-3$, by \cite{AMP}. 
We now give a bound for 
$\gdf W_{\Mod(S_0^n)}(f).$ We have the following exact sequence 
\begin{align*}\xymatrix{
 1\ar[r] & N_{\PMod(S_0^n)}(f)  \ar[r]  &  N_{\Mod(S_0^n)}(f)  \ar[r]  & A \ar[r]  & 1
 } 
\end{align*}
where $A\subseteq \Sigma_n$. Since $\langle f\rangle \subset \PMod(S_0^n)$ we have
\begin{align*}\xymatrix{
1\ar[r] &  W_{\PMod(S_0^n)}(f)  \ar[r] & W_{\Mod(S_0^n)}(f)    \ar[r] &A   \ar[r] & 1
}
\end{align*}  
  We remark that $W_{\PMod(S_0^n)}(f)$ is  torsion-free  since $\PMod(S_0^n)$ is, and $\langle f\rangle$ is maximal in $\C_{\PMod(S_0^n)}$.   Then  $l(W_{\Mod(S_0^n)}(f))\leq l(A)\leq l(\Sigma_n) $. By a result of Cameron-Solomon-Turull \cite[Theorem 1]{Cameron}, \[l(\Sigma_n) =  [\tfrac{3n-1}{2}]-b_n.\] At this point,
 Theorem \ref{thm-CM} and Lemma \ref{lem-gdfwp} together imply   
\begin{align*}
\gdf W_{\Mod(S_0^n)}(f)&\leq  \vcd W_{\Mod(S_0^n)}(f) +l(A) \\ &=  \vcd W_{\PMod(S_0^n)}(f) +l(A) 
\\
&\leq n-4 + [\tfrac{3n-1}{2}]-b_n, 
\end{align*}
where the equality holds since $\PMod(S_0^n)$ has finite index in $\Mod(S_0^n)$. 
\end{proof}

\subsection{Pure subgroups} As mentioned in the introduction, if one considers the pure mapping class group instead of the full mapping class group, then the situation is a lot easier. Indeed, after Harer's calculation of the virtual cohomological dimension of the (pure) mapping class group, we get:

\begin{prop}\label{prop-gdvc-mcgn-1}
	Let $n\geq 4$. Then  \[\gdvc \PMod(S_0^n)= \gdf(\PMod(S_0^n)) + 1= n-2.\]
\end{prop}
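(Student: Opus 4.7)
The plan is to combine Proposition~\ref{prop:main} (for the upper bound $\gdvc \PMod(S_0^n) \le n-2$) with a straightforward subgroup-monotonicity argument (for the matching lower bound). To apply Proposition~\ref{prop:main} with $G=H=\PMod(S_0^n)$ I would first note that $\PMod(S_0^n)$ inherits property (C) from $\Mod(S_0^n)$ via \cite{Daniel-Ale}, has the unique-roots property by \cite[Thm.~6.1]{bonatti-paris}, and satisfies property (Max) by Lemma~\ref{lem-max-0}. By Remark~\ref{rem-prop-max} it therefore suffices to verify the three numerical conditions of that proposition for maximal cyclic subgroups $\langle f\rangle\in\C_{\PMod(S_0^n)}$.

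With $d=n-2$, condition~(i), $\gdf \PMod(S_0^n)\le n-2$, holds with equality $n-3$ since $\PMod(S_0^n)$ is torsion-free of virtual cohomological dimension $n-3$ (Harer). Condition~(ii), $\gdf N_{G}(\langle f\rangle)\le n-3$, follows from the subgroup inequality \eqref{efin-subgroup}. Condition~(iii), $\gdf W_{G}(\langle f\rangle)\le n-2$, is precisely Lemma~\ref{lem-gdfwp}, which in fact delivers the sharper bound $\le n-4$. Proposition~\ref{prop:main} then yields $\gdvc \PMod(S_0^n)\le n-2$.

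For the matching lower bound, when $n\ge 5$ I would fix a pants decomposition $\alpha_1,\dots,\alpha_{n-3}$ of $S_0^n$ and consider the multitwist subgroup $A=\langle T_{\alpha_1},\dots,T_{\alpha_{n-3}}\rangle\cong\mathbb{Z}^{n-3}$ of $\PMod(S_0^n)$. Any $G$-CW model $X$ for $E_{\vc}G$, viewed under the restricted $A$-action, is automatically a model for $E_{\vc}A$: the virtually cyclic subgroups of $A$ are exactly the virtually cyclic subgroups of $G$ that lie in $A$, and fixed-point sets are computed intrinsically. Consequently,
\[
\gdvc \PMod(S_0^n) \;\ge\; \gdvc \mathbb{Z}^{n-3} \;=\; n-2,
\]
using the standard identity $\gdvc \mathbb{Z}^k=k+1$ for $k\ge 2$. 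For $n=4$ the identification $\PMod(S_0^4)\cong F_2$ together with $\gdvc F_2=2$ closes the gap.

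The substantive work is already absorbed into Lemma~\ref{lem-gdfwp}, whose Nielsen--Thurston case analysis on the canonical reduction system of $f$ bounds $\gdf W_{\PMod(S_0^n)}(f)$ by $n-4$; given that lemma, the upper bound is essentially formal. The only point requiring care is the lower bound in the borderline case $n=4$, where $\mathbb{Z}^{n-3}=\mathbb{Z}$ is itself virtually cyclic and so contributes nothing, forcing one to invoke the concrete identification $\PMod(S_0^4)\cong F_2$.
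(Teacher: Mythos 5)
Your argument is correct, and your lower bound is the same as the paper's: restrict a model for $\evce\PMod(S_0^n)$ to the multitwist subgroup $\mathbb{Z}^{n-3}$ and use $\gdvc \mathbb{Z}^{k}=k+1$. For the upper bound you take a mildly different route: the paper's proof is a one-line appeal to L\"uck--Weiermann \cite[Theorem 5.8]{luck-weiermann}, which says that any group with property (Max) satisfies $\gdvc G\le \gdf G+1$, combined with Lemma \ref{lem-max-0} and $\gdf\PMod(S_0^n)=n-3$; you instead re-derive the inequality by feeding Lemma \ref{lem-gdfwp} into Proposition \ref{prop:main}. Both are valid and rest on the same underlying facts (unique roots, property (Max), control of normalizers); the paper's citation is shorter, while your version makes the quantitative input $\gdf W_{\PMod(S_0^n)}(f)\le n-4$ explicit and does not need the general (Max) theorem as a black box. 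One place where you are actually more careful than the printed proof: the paper concludes the lower bound from the identity $\gdvc\mathbb{Z}^{n-3}=n-2$, which fails when $n=4$, since $\mathbb{Z}^{n-3}=\mathbb{Z}$ is itself virtually cyclic and so $\gdvc\mathbb{Z}=0$. Your separate treatment of $n=4$ via $\PMod(S_0^4)\cong F_2$ and $\gdvc F_2=2$ (from \cite{Daniel-Leary}) closes exactly that borderline case, and is needed for the statement as written.
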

The main ingredient of the proof is the following result. After \cite{luck-weiermann}, we say that  a group $G$ satisfies (Max) if every subgroup $H\in \vci_G$ is contained in a unique $H_{max}\in \vci_G$ which is maximal in $\vci_G$. 

\begin{lem} \label{lem-max-0}
	Let $n\ge 4$. Then the pure mapping class group $\PMod(S_0^n)$ satisfies property (Max).
\end{lem}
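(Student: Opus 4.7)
My plan is to use the torsion-freeness of $\PMod(S_0^n)$, Bonatti--Paris uniqueness of roots, and the Nielsen--Thurston decomposition of reducible classes. Since $\PMod(S_0^n)$ is torsion-free for $n\geq 4$, every infinite virtually cyclic subgroup is infinite cyclic, so property (Max) reduces to showing that every infinite-order $f\in\PMod(S_0^n)$ lies in a unique maximal cyclic subgroup.

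Fix such an $f$ with canonical reduction system $\sigma$ (possibly empty). Cut along $\sigma$ to get components $Y_1,\ldots,Y_{k+1}$ and let $\rho$ be the cutting homomorphism, so $\rho(f)=(f_1,\ldots,f_{k+1})$ with each $f_i$ either trivial or pseudo-Anosov by Lemma \ref{lem:purecut}. I would first show that every $g$ with $g^m=f$ for some $m\geq 1$ lies in the subgroup
\[
K := \rho^{-1}\Bigl(\prod_{i:\,f_i=\mathrm{id}}\{1\}\times \prod_{j:\,f_j \text{ pA}} N_{\PMod(Y_j)}(\langle f_j\rangle)\Bigr).
\]
Indeed, canonical reduction systems are invariant under taking nonzero powers, so $\sigma(g)=\sigma$ and $g\in\PMod(S_0^n)_\sigma$; the identity $\rho(g)_i^m=f_i$ then forces $\rho(g)_i=\mathrm{id}$ whenever $f_i=\mathrm{id}$ (by torsion-freeness of $\PMod(Y_i)$), and places $\rho(g)_j$ inside $N_{\PMod(Y_j)}(\langle f_j\rangle)$ whenever $f_j$ is pseudo-Anosov.

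The crucial step is to argue that $K$ is a finitely generated, torsion-free, \emph{abelian} group. First, $T_\sigma$ is central in $\PMod(S_0^n)_\sigma$: every pure mapping class of a punctured sphere fixes each simple closed curve (being determined by the partition of punctures it induces), and so in particular commutes with each Dehn twist $T_{\alpha_i}$. Next, by Lemma \ref{lem:pA-norm} combined with torsion-freeness, each normalizer $N_{\PMod(Y_j)}(\langle f_j\rangle)$ is infinite cyclic, generated by some $\widetilde{f_j}$. Choose a lift $\widetilde{F_j}\in K$ represented by a homeomorphism that realizes $\widetilde{f_j}$ on $Y_j$ and is the identity on a collar of $\sigma$ and on every $Y_{j'}$ with $j'\neq j$. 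For $j\neq j'$, these representatives have genuinely disjoint supports and therefore commute. Hence $K$ is generated by the pairwise commuting elements $T_{\alpha_1},\ldots,T_{\alpha_k},\widetilde{F_1},\ldots,\widetilde{F_r}$, so $K\cong \mathbb{Z}^{k+r}$, where $r$ is the number of pseudo-Anosov components of $f$.

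Finally, inside a finitely generated free abelian group, the cyclic subgroup $\langle f\rangle$ has a unique maximal cyclic overgroup, namely the one generated by the primitive representative of $f$ in $K$. By the first step, every cyclic subgroup of $\PMod(S_0^n)$ containing $\langle f\rangle$ lies in $K$, so this overgroup is also maximal in $\PMod(S_0^n)$; this establishes property (Max). The main obstacle is verifying that $K$ is abelian: one must ensure that each lift $\widetilde{F_j}$ can be realized by a homeomorphism genuinely supported on $Y_j$, so that commutators of lifts from distinct components do not introduce a nontrivial multitwist along $\sigma$.
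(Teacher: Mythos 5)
Your argument is essentially the paper's proof of Lemma \ref{lem-max-0} written out in full: the paper likewise reduces to infinite cyclic subgroups, handles pseudo-Anosov classes via the cyclic (normalizer of the) centralizer, and treats reducible classes through the cutting homomorphism of Lemma \ref{lem:purecut}; you have simply made explicit the free abelian group $K$ in which every root of $f$ must live. The overall structure is sound, including the two points you flag as delicate: a generator of $N_{\PMod(Y_j)}(\langle f_j\rangle)$ can be represented by a homeomorphism supported on the compact subsurface of $S_0^n$ obtained by deleting open annular neighbourhoods of $\sigma$ and extended by the identity elsewhere, so lifts from distinct components commute and $K$ is free abelian; the primitive-root argument in $K\cong\mathbb{Z}^{k+r}$ then produces the unique maximal cyclic overgroup (one should just read $K$ as the preimage inside $\PMod(S_0^n)_\sigma$, which suffices since all the roots under consideration are pure).

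One justification, however, is false as stated: it is not true that every pure mapping class of a punctured sphere fixes every simple closed curve, nor that a curve is determined up to isotopy by the partition of the punctures it induces --- any two curves differing by a Dehn twist about a curve they essentially intersect induce the same partition without being isotopic, and if the claim were true $\PMod(S_0^n)$ would act trivially on the curve complex. What is true, and is all you need, is that a pure mapping class $g$ preserving the multicurve $\sigma$ fixes each $\alpha_i\in\sigma$ individually, because two \emph{disjoint} curves on a punctured sphere inducing the same partition of the punctures are isotopic; hence $gT_{\alpha_i}g^{-1}=T_{g(\alpha_i)}=T_{\alpha_i}$ and $T_\sigma$ is central in $\PMod(S_0^n)_\sigma$. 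With that repair the proof is complete.
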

\begin{proof}
Let $C\subset D$ be an inclusion of infinite  cyclic subgroups. Then the centralizers of $C$ and $D$ in $\PMod(S_0^n)$ are equal, since $\PMod(S_0^n)$ has unique roots.  If $C$ is generated by a pseudo-Anosov class, then  its centralizer is cyclic, and in particular  is the unique  maximal cyclic subgroup that contains $C$.  If $C$ is  generated by a reducible element $f$, by Lemma \ref{lem:purecut} and the case of pseudo-Anosov classes,  we obtain a unique maximal cyclic subgroup that contains $C$. 
\end{proof}

\begin{proof}[Proof of Proposition \ref{prop-gdvc-mcgn-1}] As mentioned in the introduction, L\"uck and Weiermann \cite[Theorem 5.8]{luck-weiermann} proved that every group with property (Max) satisfies inequality \eqref{eq:main}. Now, a combination of Harer's calculation \cite{Harer} of the virtual cohomological dimension of $\Mod(S_0^n)$  and \cite[Corollary 10.5]{HOP}  yields $\gdf \PMod(S_0^n) = n-3$. Therefore, \[\gdvc \PMod(S_0^n) \le n-2.\]
	
	Now, $S_0^n$ contains $n-2$ disjoint essential curves $\alpha_1,...,\alpha_{n-3}$,  and the subgroup $\langle T_{\alpha_1},...,T_{\alpha_{n-3}} \rangle$ is isomorphic to $\z^{n-3}$. By  property (\ref{efin-subgroup}), we conclude  $\gdvc \Mod(S_0^n)\geq \gdvc \z^{n-3}=n-2$. 
\end{proof}

%
%
%

\section{Appendix. Surfaces with boundary} 
Finally, we explain how to establish inequality \eqref{eq:main} in the case of mapping class groups of surfaces with non-empty boundary. As indicated in the introduction, the arguments appear in the paper of Flores and Gonz\'alez-Meneses \cite{FG} in the case when the surface has genus zero. For completeness, we give a self-contained argument here. 

For $S$  a surface with non-empty boundary, its mapping class group $\Mod(S)$ is again defined as the group of isotopy classes of self-homeomorphisms of $S$, but this time the homeomorphisms and isotopies are required to fix each boundary component pointwise. As a by-product of this definition, $\Mod(S)$ has no torsion. 

The main ingredient will be the following result of Mart\'inez-P\'erez \cite{Conchita}; again, $\vcd(\cdot)$ denotes  virtual cohomological dimension: 

\begin{thm}\label{thm-rk-CM}
	Let $G$ be a group such that any finite  subgroup is nilpotent. Suppose  $\vcd G <\infty$ and $\gdf G \geq 3 $, then 
	$$\gdf G \leq \max_{F\in \fin_G} \{\vcd G + rk(W_G F) \},$$
	where  $rk(\cdot)$ denotes the biggest rank of a finite  elementary abelian subgroup. 
\end{thm}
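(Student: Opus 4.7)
The plan is to reduce the bound on $\gdf G$ to a bound on the Bredon cohomological dimension $\underline{\mathrm{cd}}\, G$ with respect to the family $\fin_G$. By the theorem of Dunwoody, strengthened by L\"uck and Meintrup, one has $\gdf G = \underline{\mathrm{cd}}\, G$ whenever the latter is at least $3$. Since by hypothesis $\gdf G \ge 3$, it suffices to prove
\[
\underline{\mathrm{cd}}\, G \;\le\; \max_{F\in\fin_G}\{\vcd G + rk(W_G F)\}.
\]

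To estimate $\underline{\mathrm{cd}}\, G$, the natural tool is a Connolly-Kozniewski--type isotropy spectral sequence obtained by filtering a cocompact model $X$ for $\efine G$ by the poset of finite stabilizers. For each conjugacy class $[F]$ of finite subgroups, the corresponding $E_1$-stratum is built from the group cohomology of the Weyl group $W_G F$ with coefficients supported on the fixed-point subcomplex $X^F$. Choosing $X$ so that a torsion-free subgroup $K \le G$ of finite index acts freely with quotient of dimension $\vcd G$, one can arrange $\dim(X^F) \le \vcd G$ for every $F\in\fin_G$. This accounts for the appearance of $\vcd G$ in the stated bound.

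For the contribution from $W_G F$, the nilpotency hypothesis on finite subgroups of $G$ is essential: a Quillen-style detection argument shows that the cohomology of a finite nilpotent group is controlled, prime-by-prime, by its elementary abelian subgroups. Transferring this detection across the Weyl-group action on $X^F$ allows one to bound the total contribution to $\underline{\mathrm{cd}}\, G$ from each stratum by $\vcd G + rk(W_G F)$, rather than by the larger $\vcd G + l(W_G F)$ appearing in Theorem \ref{thm-CM}. Taking the maximum over conjugacy classes of finite subgroups gives the claimed inequality.

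The main obstacle will be verifying that the cohomological contributions combine additively in the spectral sequence (so that one truly gets $\vcd G + rk(W_G F)$ rather than something larger), and adapting Quillen-type detection to the Bredon framework with enough sharpness to produce the rank invariant rather than a coarser one such as length. The hypothesis $\gdf G \ge 3$ is exactly what is needed to pass back from $\underline{\mathrm{cd}}\, G$ to $\gdf G$ at the end of the argument.
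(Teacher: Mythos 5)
First, a point of order: the paper does not prove this statement at all --- Theorem \ref{thm-rk-CM} is quoted verbatim from Mart\'inez-P\'erez \cite{Conchita} and used as a black box in Lemma \ref{lem-gdf-w-b}. So there is no in-paper proof to compare against, and your proposal has to stand on its own. Its first step is fine: reducing $\gdf G$ to the Bredon cohomological dimension $\underline{\mathrm{cd}}\,G$ via L\"uck--Meintrup is legitimate precisely because of the hypothesis $\gdf G\ge 3$, and working with strata indexed by conjugacy classes of finite subgroups and their Weyl groups is indeed the right general framework.

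Beyond that, however, there are genuine gaps. (i) Nothing in the hypotheses ($\vcd G<\infty$, finite subgroups nilpotent) provides a \emph{cocompact} model for $\efine G$, and the assertion that one can ``arrange $\dim(X^F)\le \vcd G$ for every $F$'' is unsubstantiated --- bounding the dimension of the fixed-point sets, equivalently the proper dimension of the normalizers $N_G(F)$, is essentially the content of what is to be proved, so as written this step begs the question. (ii) The appeal to Quillen-style detection is the wrong tool for the job. Quillen's $F$-isomorphism/stratification theorem controls the mod-$p$ cohomology \emph{ring} of a finite group up to nilpotents, i.e.\ its variety and Krull dimension; it says nothing about vanishing of cohomology in a range of degrees. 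Since $H^*(F;\mathbb{F}_p)$ is nonzero in infinitely many degrees whenever $p$ divides $|F|$, detection cannot by itself convert into a bound on a cohomological dimension. The mechanism by which nilpotency buys $rk$ in place of the length $l$ in Theorem \ref{thm-CM} is combinatorial rather than cohomological: one replaces chains of finite subgroups (whose maximal length is $l$) by the poset of elementary abelian subgroups, using Quillen's \emph{poset} equivalence $\mathcal{A}_p\simeq\mathcal{S}_p$ for nilpotent (hence products of $p$-) groups, which has dimension governed by the rank. Your sketch never makes this substitution, and you yourself flag the additivity of the strata contributions as an unresolved ``main obstacle.'' As it stands, the two steps that actually produce the summands $\vcd G$ and $rk(W_GF)$ are both missing.
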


Denoting by $\S_{g,b}^n$ the connected, orientable surface of genus $g$, with $n$ marked points and $b$ boundary components. We remark that for $m\geq 3$ the congruence subgroups $\Mod(S_{g,b}^n)[m]$ are finite-index subgoups that have property (Max) \cite[Prop. 5.11]{Daniel-Ale},  and property of uniqueness of roots \cite{bonatti-paris}. 

We will make use of the following lemma:

\begin{lem}\label{lem-gdf-w-b}
Let $b\geq 1$. If $g=0$,  suppose $b+n\geq 4$, and if  $g\geq 1$,  suppose $2g+b+n\ge 3$.    Fix $m\geq 3$  and let $C\in \C_{\Mod(S_{g,b}^n)[m]}$ maximal,  then 
\begin{align}
\gdf W_{\Mod(S_{g,b}^n)}(C)\leq \gdf \Mod(S_{g,b}^n) +1 .
\end{align}
\begin{proof}
We will use Theorem \ref{thm-rk-CM}. First, the hypotheses imply that $\Mod(S_{g,b}^n$ contains $\mathbb Z^k$ with $k\ge 3$, and thus $\vcd \Mod(S_{g,b}^n) +1 \geq 3$. Also,   observe that $\gdf \Mod(S_{g,b}^n) = \vcd  \Mod(S_{g,b}^n)$ since $\Mod(S_{g,b}^n)$ has no torsion. 

Let $C\in \C_{\Mod(S_{g,b}^n)[m]}$ be maximal.  Note that any finite subgroup of $W_{\Mod(S_{g,b}^n)}(C)$ is of the form $V/C$  where $V$ is an infinite cyclic subgroup of $N_{\Mod(S_{g,b}^n)}(C)$. Again since $\Mod(S_{g,b}^n)$ has no torsion, it follows that finite subgroups of $W_{\Mod(S_{g,b}^n)}(C)$ are cyclic.  

Write, for compactness, $Q=W_{\Mod(S_{g,b}^n)}(C) $. Applying Theorem \ref{thm-rk-CM} we have 
	\begin{align}
	\gdf Q \leq \max_{F\in \fin_{Q} }\{ \vcd Q + rk\{W_Q(F)\}\} = \vcd Q+ 1.  
	\end{align}
	We will give a bound for $\vcd Q$.  Consider the short exact sequence
	\begin{align}
	\xymatrix{
		1\ar[r]& N_{\Mod(S_{g,b}^n)[m]}(C)  \ar[r]& N_{\Mod(S_{g,b}^n)}(C) \ar[r] &K 
		\ar[r]&1 
	}, 
	\end{align}
	where $K$ is a  subgroup of  the finite group $\Aut(H_1(S_{g,b}^n, \z_m))$. Passing to the quotient,  we have
	\begin{align}
	\xymatrix{
		1\ar[r]& W_{\Mod(S_{g,b}^n)[m] }(C) \ar[r]& W_{\Mod(S_{g,b}^n)}(C) \ar[r] &K' 
		\ar[r]&1 
	}, 
	\end{align}
	where $K'\simeq K$.  Since $C$ is maximal in  $\C_{\Mod(S_{g,b}^n)[m]}$, then  $W_{\Mod(S_{g,b}^n)[m]}(C) $  is torsion-free, and thus 
	\begin{align}
	\vcd W_{\Mod(S_{g,b}^n)}(C) &= \vcd W_{\Mod(S_{g,b}^n)[m]}(C)  \nonumber  \\
	&\leq  \gdf W_{\Mod(S_{g,b}^n)[m]}(C)  \nonumber  \\
	& \leq \gdf  N_{\Mod(S_{g,b}^n)[m]}(C) \label{ineq-thm5-8}\\
	&\leq\gdf  \Mod(S_{g,b}^n) \label{ineq-inclus}\\
	\end{align}
	where the equality holds since $\Mod(S_{g,b}^n)[m]$ has finite index in $\Mod(S_{g,b}^n)$,   inequality (\ref{ineq-thm5-8})  is given  in the proof of   \cite[Thm. 5.8]{luck-weiermann}, and inequality (\ref{ineq-inclus}) follows from  subgroup inclusion. Thus the result follows. 
\end{proof}

\end{lem}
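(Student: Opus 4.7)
The plan is to apply Theorem \ref{thm-rk-CM} to $Q := W_{\Mod(S_{g,b}^n)}(C)$, thereby reducing the problem to bounding $\vcd Q$ in terms of $\gdf \Mod(S_{g,b}^n)$ and controlling the rank contribution. Theorem \ref{thm-rk-CM} will give $\gdf Q \leq \vcd Q + \max_{F \in \fin_Q} \mathrm{rk}(W_Q(F))$ once its hypotheses are checked; we may assume $\gdf Q \geq 3$, as otherwise the asserted inequality follows at once from the fact that $\Mod(S_{g,b}^n)$ is infinite under the given complexity hypotheses.

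First I would verify the hypotheses of Theorem \ref{thm-rk-CM}. Since $b \geq 1$ the group $\Mod(S_{g,b}^n)$ is torsion-free, so every nontrivial finite subgroup of $Q = N_{\Mod(S_{g,b}^n)}(C)/C$ has the form $V/C$ for some infinite cyclic subgroup $V$ strictly containing $C$. Consequently every finite subgroup of $Q$ is cyclic, and the same holds for each Weyl group $W_Q(F)$; in particular finite subgroups are nilpotent and $\mathrm{rk}(W_Q(F)) \leq 1$ for all $F \in \fin_Q$, so Theorem \ref{thm-rk-CM} yields $\gdf Q \leq \vcd Q + 1$.

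Next I would bound $\vcd Q$ by passing to the congruence subgroup. Restricting the short exact sequence $1 \to \Mod(S_{g,b}^n)[m] \to \Mod(S_{g,b}^n) \to A \to 1$---where $A$ is a finite subgroup of $\Aut(H_1(S_{g,b}^n,\z_m))$---to normalizers and modding out by $C$ produces
\[ 1 \to W_{\Mod(S_{g,b}^n)[m]}(C) \to Q \to K' \to 1, \]
with $K'$ finite, so $\vcd Q = \vcd W_{\Mod(S_{g,b}^n)[m]}(C)$. Because $\Mod(S_{g,b}^n)[m]$ is torsion-free with unique roots and $C$ is maximal in $\C_{\Mod(S_{g,b}^n)[m]}$, the group $W_{\Mod(S_{g,b}^n)[m]}(C)$ is itself torsion-free, so $\vcd$ and $\gdf$ agree for it. Using that $\Mod(S_{g,b}^n)[m]$ satisfies (Max), the construction inside the proof of \cite[Thm. 5.8]{luck-weiermann} produces a model for the classifying space for proper actions of the Weyl group from one for the normalizer, giving $\gdf W_{\Mod(S_{g,b}^n)[m]}(C) \leq \gdf N_{\Mod(S_{g,b}^n)[m]}(C)$, which is at most $\gdf \Mod(S_{g,b}^n)$ by subgroup inclusion and finite-index.

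Combining everything yields $\gdf Q \leq \vcd Q + 1 \leq \gdf \Mod(S_{g,b}^n) + 1$, as desired. The main obstacle will be extracting the intermediate estimate $\gdf W_{\Mod(S_{g,b}^n)[m]}(C) \leq \gdf N_{\Mod(S_{g,b}^n)[m]}(C)$, which is not a formal subgroup bound but relies on a specific geometric construction in the proof of \cite[Thm. 5.8]{luck-weiermann} that genuinely uses property (Max); a secondary delicate point is checking that the required properties of the congruence subgroups---torsion-freeness, (Max), and unique roots---transfer from the closed or punctured setting to surfaces with non-empty boundary.
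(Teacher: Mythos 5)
Your proposal is correct and follows essentially the same route as the paper: apply Theorem \ref{thm-rk-CM} to $Q=W_{\Mod(S_{g,b}^n)}(C)$ after observing that torsion-freeness of $\Mod(S_{g,b}^n)$ forces all finite subgroups of $Q$ (and of its Weyl groups) to be cyclic, and then bound $\vcd Q$ by passing to the congruence subgroup $\Mod(S_{g,b}^n)[m]$ and invoking the L\"uck--Weiermann construction to get $\gdf W_{\Mod(S_{g,b}^n)[m]}(C)\le \gdf N_{\Mod(S_{g,b}^n)[m]}(C)\le \gdf\Mod(S_{g,b}^n)$. Your handling of the hypothesis $\gdf Q\ge 3$ by a harmless reduction is if anything cleaner than the paper's, and the two delicate points you flag (the L\"uck--Weiermann step and the properties of the congruence subgroups) are exactly the ones the paper relies on via \cite{luck-weiermann}, \cite{Daniel-Ale} and \cite{bonatti-paris}.
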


Finally, we have the desired bound for surfaces with boundary:

\begin{prop}
Let $b\geq 1$. If $g=0$,  suppose $b+n\geq 4$, and if  $g\geq 1$,  suppose $2g+b+n\ge 3$.  Then $$\gdvc \Mod(S_{g,b}^n)\leq \gdf \Mod(S_{g,b}^n)+1,$$ and equality holds if  $g\in\{0,1\}$. 
\end{prop}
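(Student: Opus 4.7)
My plan is to deduce the proposition from Proposition \ref{prop:main} applied with $G=\Mod(S_{g,b}^n)$ and the finite-index normal subgroup $H=\Mod(S_{g,b}^n)[m]$ for some fixed $m\geq 3$. The paragraph preceding Lemma \ref{lem-gdf-w-b} records that $H$ enjoys both property (Max) and uniqueness of roots, while $G$ satisfies property (C); hence by Remark \ref{rem-prop-max}, it suffices to verify the three hypotheses of Proposition \ref{prop:main} for an arbitrary maximal $C\in\C_H$, with target value $d=\gdf\Mod(S_{g,b}^n)+1$.

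Condition (i) is tautological since $\gdf G=d-1\leq d$. Condition (ii) follows from the subgroup monotonicity \eqref{efin-subgroup} applied to $N_G(C)\subseteq G$, giving $\gdf N_G(C)\leq \gdf G=d-1$. Condition (iii) is exactly the content of Lemma \ref{lem-gdf-w-b}. Feeding these three inputs into Proposition \ref{prop:main} yields $\gdvc\Mod(S_{g,b}^n)\leq \gdf\Mod(S_{g,b}^n)+1$, which is the advertised upper bound.

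For the equality statement when $g\in\{0,1\}$, I would match this bound by exhibiting a free abelian subgroup of $\Mod(S_{g,b}^n)$ of the correct rank. A pants decomposition of $S_{g,b}^n$ together with the $b$ boundary-parallel twists produces $3g-3+2b+n$ pairwise commuting independent Dehn twists, generating a subgroup $\z^{3g-3+2b+n}\subseteq \Mod(S_{g,b}^n)$. In the genus-zero and genus-one regimes (and only there), Harer's computation combined with the torsion-freeness of $\Mod(S_{g,b}^n)$ yields $\gdf\Mod(S_{g,b}^n)=\vcd\Mod(S_{g,b}^n)=3g-3+2b+n$. The numerical hypotheses on $(b,n)$ ensure this rank is at least $2$, so subgroup monotonicity for the virtually-cyclic family, together with the standard fact that $\gdvc\z^k=k+1$ for $k\geq 2$ (as already invoked in the proof of Proposition \ref{prop-gdvc-mcgn-1}), gives $\gdvc\Mod(S_{g,b}^n)\geq \gdf\Mod(S_{g,b}^n)+1$.

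The essential analytic work has been compressed into Lemma \ref{lem-gdf-w-b}, which itself invokes Mart\'inez-P\'erez's Theorem \ref{thm-rk-CM}; the remainder is routine assembly. The one subtlety is carefully matching Harer's $\vcd$ formula for surfaces with both boundary and marked points against the multi-twist rank produced by a pants decomposition plus boundary twists, so that the lower bound saturates $\gdf+1$ precisely in the genus-zero and genus-one cases.
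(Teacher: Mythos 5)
Your argument is correct and follows the paper's own proof essentially verbatim: the upper bound is Proposition \ref{prop:main} together with Remark \ref{rem-prop-max} and Lemma \ref{lem-gdf-w-b}, applied with $d=\gdf\Mod(S_{g,b}^n)+1$ (which is indeed the right choice of $d$ for the three hypotheses to read as you state them), and the lower bound comes from a maximal-rank free abelian subgroup. The only cosmetic difference is that you build that abelian subgroup explicitly from a pants decomposition plus the $b$ boundary twists and check the rank count against Harer's formula, whereas the paper simply cites Harer's Theorem 4.1 for the fact that the maximal abelian rank equals $\vcd\Mod(S_{g,b}^n)$ precisely when $g\in\{0,1\}$.
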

\begin{proof}
For the inequality $\gdvc \Mod(S_{g,b}^n)\leq \gdf \Mod(S_{g,b}^n)+1$,  the proof is again an immediate consequence of Proposition \ref{prop:main}, Remark  \ref{rem-prop-max} and  Lemma \ref{lem-gdf-w-b}, with $d= \gdf \Mod(S_{g,b}^n)$. 

By \cite[Thm. 4.1]{Harer}, $\vcd \Mod(S_{g,b}^n)$  and the maximal rank of  an abelian  subgroup  of $Mod(S_{g,b}^n)$ are equal if only if $g\in \{0,1\} $.  Let $\Lambda$ that abelian subgroup of rank $\vcd \Mod(S_{g,b}^n)$, then $\gdvc \Lambda = \vcd \Mod(S_{g,b}^n)+1$, therefore $   \gdvc \Mod(S_{g,b}^n) \geq \vcd \Mod(S_{g,b}^n)+1$. 
 \end{proof}


\begin{thebibliography}{} 

\bibitem{AMP}{J. Aramayona and C. Mart\'inez-P\'erez. The proper geometric dimension of the mapping class group. Algebraic and Geometry Topology 14 (2014)217-227}



\bibitem{birman}{J. S. Birman. Braids, links and mapping class groups. Ann. Math. Stud. 82, Princeton University Press, 1974. }

\bibitem{bonatti-paris}{C. Bonatti and L. Paris. Roots in the mapping class groups. Proc. Lond. Math. Soc. (3)  98  (2009),  no. 2, 471--503.}

\bibitem{Cameron} P. J. Cameron, R. Solomon, A. Turull, Chains of subgroups in symmetric groups.
{\em J. Algebra} 127 (1989).


\bibitem{DP}{D. Degrijse, N. Petrosyan. Bredon cohomological dimension for groups acting on CAT(0)-spaces. Group. Groups Geom. Dyn.  9  (2015),  no. 4, 1231--1265.   }

\bibitem{DP2} D. Degrijse, N. Petrosyan.Geometric dimension of groups for the family of virtually cyclic subgroups.
{\em J. Topol.} 7 (2014) .

\bibitem{margalit}{B. Farb and D. Margalit. A primer on mapping class groups. Princeton Mathematical Series, 49, Princeton Univ. Press, Princeton, NJ, 2012. }



\bibitem{FG} R. Flores and J. Gonz\'alez-Meneses. Classifying spaces for the family of virtually cyclic subgroups of braid groups. Preprint, {\em arXiv:1611.02187}.

\bibitem{MF-BN} M. G. Fluch and B. E. A. Nucinkis.  On the classifying space for the family of virtually cyclic subgroups for elementary amenable groups. 
Proc. Amer. Math. Soc.  141  (2013),  no. 11, 3755--3769.




\bibitem{Harer}{J. L. Harer. The virtual cohomological dimension of the mapping class group of an orientable surface. Inventiones mathematicae, 84, 157-176(1986) Springer-Verlag. }

\bibitem{HOP} S. Hensel, D. Osajda, P.  Przytycki, Realisation and dismantlability. {\em Geom. Topol.} 18 (2014).


\bibitem{Ivanov} N. V. Ivanov. Mapping class groups. {\em Handbook of geometric topology}, North-Holland, Amsterdam, 2002. 

\bibitem{Daniel-Leary}{D. Juan-Pineda and I. J. Leary. On classifying spaces for the family of
virtually cyclic subgroups. In Recent developments in algebraic topology,
volume 407 of Contemp. Math., pages 135-145. Amer. Math. Soc., Providence, RI, 2006. }


\bibitem{Daniel-Ale}{D. Juan-Pineda and A. Trujillo-Negrete. Dimension for classifying spaces for the family of virtually cyclic subgroups in mapping class groups}. To appear in
Pure and Applied Mathematics Quarterly.


\bibitem{KPS} A. Karrass, A. Pietrowski, D. Solitar, Finite and infinite cyclic extensions of free groups. {\em J. Austral. Math. Soc. 16 (1973).} 



\bibitem{luck-cat(0)-vc}{W. L\"uck. On the classifying space of the family of virtually cyclic subgroups for CAT$(0)$-groups. M\"unster J. of Math. 2, 201-214, 2009}

\bibitem{luck}{W. L\"uck. Survey on classifying spaces for families of subgroups. In infinite groups: geometric, combinatorial and dynamical aspects, volume 248 of Progr. Math., pages 269-322. Birkh\"auser, Basel, 2005. }




\bibitem{luck-type}{W. L\"uck. The type of the classifying space for a family of subgroups. Journal of Pure and Applied Algebra 149  177-203, 2000. }





\bibitem{luck-weiermann}{W. L\"uck., and M. Weiermann. On the classifying space of the family of virtually cyclic subgroups,
Pure and Applied Mathematics Quarterly, Vol. 8 Nr. 2 (2012). }

\bibitem{Conchita}{ C. Mart\'inez-P\'erez, A bound for the Bredon cohomological dimension
{\em J. Group Theory}, 731-747, 10  (2007).}




\bibitem{Wolpert} S. A. Wolpert. Geometry of the Weil-Petersson completion of Teichm\"uller space. IN {\em Surveys in differential geometry, Vol. VIII}. Int. Press, Somerville, MA, 2003.

 \end{thebibliography}
\end{document}